\documentclass[12pt,a4paper]{article}

\usepackage{fontspec}
\usepackage[english]{babel}
\usepackage{amsmath,amssymb,amsthm,mathtools}
\numberwithin{equation}{section}
\usepackage{geometry}
\usepackage{booktabs,array}
\usepackage{xcolor}
\usepackage{csquotes}
\usepackage{microtype}
\usepackage{tikz}
\usetikzlibrary{arrows.meta,positioning,calc}
\usepackage[
    backend=biber,
    style=numeric,
    sorting=nyt,
    sortcites=true,
    giveninits=true,
    maxbibnames=99
]{biblatex}
\DeclareNameAlias{author}{given-family}
\DeclareNameAlias{editor}{given-family}
\DeclareFieldFormat[article]{title}{#1}
\DeclareFieldFormat[article]{number}{\mkbibparens{#1}}
\renewbibmacro*{volume+number+eid}{%
    \printfield{volume}%
    \setunit*{\addspace}%
    \printfield{number}%
    \setunit{\bibeidpunct}%
    \printfield{eid}}
\renewbibmacro{in:}{}
\defbibheading{bibliography}[\refname]{\section*{\centering #1}}
\AtBeginBibliography{\sloppy}
\usepackage{hyperref}
\hypersetup{
    pdftitle={Hausdorff Dimension of the Set of Extreme Points of a Random Countable Stable Zonotope},
    pdfauthor={Maksim Kukushkin},
    pdfsubject={Exact dimension of the set of extreme points of a random countable stable zonotope},
    pdfkeywords={random zonotope, extreme point, Hausdorff dimension, stable random vector, Frostman method}
}
\newif\ifprintversion
\ifprintversion
    \hypersetup{hidelinks}
\else
    \hypersetup{
        colorlinks=true,
        linkcolor=blue!55!black,
        citecolor=blue!55!black,
        urlcolor=blue!55!black
    }
\fi

\newtheorem{theorem}{Theorem}[section]
\newtheorem{lemma}[theorem]{Lemma}
\newtheorem{proposition}[theorem]{Proposition}
\newtheorem{corollary}[theorem]{Corollary}
\theoremstyle{definition}
\newtheorem{definition}{Definition}[section]
\newtheorem{remark}{Remark}[section]
\theoremstyle{plain}

\newcommand{\R}{\mathbb R}
\newcommand{\E}{\mathbb E}
\newcommand{\1}{\mathbf 1}
\newcommand{\Sph}{S^{d-1}}
\newcommand{\dd}{\mathop{}\!\mathrm d}
\newcommand{\eps}{\varepsilon}
\newcommand*{\eq}{\mathrel{\overset{\mathrm{def}}{=}}}
\newcommand{\step}[1]{\par\medskip\noindent\textbf{#1}\par\smallskip}
\newcommand{\nbcite}[2][]{\mbox{\cite[#1]{#2}}}
\DeclareMathOperator{\ext}{ext}
\DeclareMathOperator{\expo}{expo}
\DeclareMathOperator{\diam}{diam}
\DeclareMathOperator{\dimH}{dim_H}
\let\leq\leqslant
\let\geq\geqslant

\title{Hausdorff Dimension of the Set of Extreme Points\\
of a Random Countable Stable Zonotope}
\author{Maksim Kukushkin\\[-0.1em]
\small Faculty of Mathematics and Computer Science\\[-0.15em]
\small Saint Petersburg State University, Saint Petersburg, Russia}
\date{}

\begin{document}
\maketitle

\begin{abstract}
Let \(d\geq2\), \(0<\alpha<1\), and let \(\Gamma_k\) be the successive arrival
times of a standard Poisson process on \((0,\infty)\). Given independent
uniform directions \(\eps_k\in\Sph\), independent of \((\Gamma_k)\), we
consider the random countable stable zonotope
\[
    Z_\alpha=\bigoplus_{k=1}^{\infty}
    \Gamma_k^{-1/\alpha}[0,\eps_k].
\]
For the set of extreme points \(\ext Z_\alpha\), we prove that almost surely
\[
    \dimH\ext Z_\alpha=(d-1)\alpha,
\]
and that the critical Hausdorff measure
\(\mathcal H^{(d-1)\alpha}(\ext Z_\alpha)\) is almost surely finite. The lower
bound follows from the tangential non-degeneracy of the stable increments of
the parametrizing field and Frostman's energy criterion. For the upper bound
we construct an adaptive covering: at each scale the Poisson jumps are split
into large and small ones, the large jumps determine a finite hyperplane
arrangement, and the sum of the small jumps controls the diameters of the
images of its cells.
\end{abstract}

\medskip
\begin{center}
\begin{minipage}{0.88\textwidth}\small
\noindent\textbf{Keywords:} random countable zonotope, extreme point,
Hausdorff dimension, LePage series, stable random vector, Frostman method.
\end{minipage}
\end{center}
\section{Introduction}\label{sec:introduction}

Random countable zonotopes arise as stable random elements of the cone of
compact convex sets endowed with Minkowski addition. Gin{\'e} and
Hahn~\nbcite{gine1985} proved that, for \(1\leq\alpha\leq2\), every
\(\alpha\)-stable random compact convex set can be represented as
\(K\oplus\{\xi\}\), where \(K\) is a deterministic compact convex set and
\(\xi\) is an \(\alpha\)-stable random vector: the shape of the set is
deterministic and all randomness is confined to a translation. Nontrivial
random geometry therefore occurs only for \(0<\alpha<1\), which is the range
considered below. The general probabilistic theory of stable laws on cones was
developed in~\nbcite{davydov2000,davydov2004,davydov2008}; a systematic account
of random set theory is given in~\nbcite{molchanov2005}. The geometry of the set
of extreme points was isolated as a separate problem in~\nbcite{davydov2019}.

In dimension \(d=2\), Davydov and Paulauskas~\nbcite[Theorem~5]{davydov2019}
proved the upper bound \(\dimH\ext Z_\alpha\leq\alpha\) for every spectral
distribution on the circle. Under the additional
condition~\nbcite[Theorem~7]{davydov2019} that the angle law be the image of the
uniform law on \([0,\pi]\) under a bimeasurable bijection, they proved the
reverse bound and hence equality; the uniform measure meets this condition.
Their proof unfolds one
half of the boundary of a planar zonotope into the range of a one-dimensional
\(\alpha\)-stable subordinator and does not extend directly to dimensions
\(d\geq3\).

For \(d\geq3\), the standard projection argument yields only
\(\dimH\ext Z_\alpha\geq\alpha\). Indeed, a projection of a zonotope is again
a zonotope, \(\ext(\pi K)\subset\pi(\ext K)\) for a linear map \(\pi\), and a
Lipschitz image cannot increase Hausdorff dimension; thus the dimension of a
two-dimensional projection is a lower bound for that of the original set. The
resulting value \(\alpha\) is independent of \(d\) and does not reflect the
dimension of the ambient space. It was observed in
\nbcite[Remark~8]{davydov2019} that the projection method cannot provide an
upper bound, and it was conjectured that the correct value is
\((d-1)\alpha\); the case \(d=3\) was identified there as the first step
toward this conjecture.

A first upper bound in arbitrary dimension was obtained by the author
in~\nbcite{kukushkin2025}:
\[
    \dimH\ext Z_\alpha\leq\alpha+d-2
    \qquad\text{almost surely}.
\]
For \(d\geq3\), this estimate is strictly larger than the conjectured value
\((d-1)\alpha\) and therefore does not close the gap between the known lower
and upper bounds.

The present paper closes this gap for the uniform spectral measure by proving
the conjecture. Both estimates are built from the random field
\[
    X(u)=\sum_{k=1}^{\infty}\Gamma_k^{-1/\alpha}\eps_k
    \1_{\{\langle u,\eps_k\rangle>0\}},
    \qquad u\in\Sph,
\]
which parametrizes the exposed points of the zonotope, although the two halves
of the proof use substantially different methods. In addition to the exact
dimension, the proof of the upper bound yields finiteness of the critical
Hausdorff measure.

\subsection{Notation and conventions}\label{sec:notation}

Throughout, \(d\geq2\) and \(N=d-1\). Constants \(c,C\) may change from line
to line and depend only on explicitly fixed parameters, most notably on \(d\)
and \(\alpha\). Symbols such as \(C_N,C_q,C_\beta\) denote constants that may
also depend on the indicated subscript. Differentials are set in roman type,
as in \(\dd x\), and \(\eq\) denotes equality by definition.

The principal notation is summarized below.

\begin{center}
\small
\begin{tabular}{@{}p{34mm}p{105mm}@{}}
\toprule
notation & meaning\\
\midrule
\(d\),\ \(N=d-1\) & dimension of the ambient space and of the parameter sphere\\
\(\alpha\in(0,1)\) & stability index\\
\(\Gamma_k\),\ \(\gamma_k\eq\Gamma_k^{-1/\alpha}\) & arrival times of a standard Poisson process and segment weights\\
\(\eps_k\),\ \(\sigma\) & random directions and their common law, the uniform probability measure on \(\Sph\)\\
\(\lambda\) & restriction of \(\sigma\) to a geodesic ball \(U\subset\Sph\)\\
\(d_S\),\ \(\theta\) & geodesic distance on the sphere and the angle between directions\\
\(Z_\alpha\),\ \(F_\alpha\eq\ext Z_\alpha\) & the zonotope and the set of its extreme points\\
\(X(u)\),\ \(G\) & parametrizing field and the set of good directions\\
\(\Pi\),\ \(\nu\) & Poisson random measure of the jumps and its intensity\\
\(\Delta(u,v)\),\ \(Y(u,v)\),\ \(P_u\) & field increment, its tangential projection, and the orthogonal projection onto \(u^\perp\)\\
\(\Psi_{u,v}\),\ \(\Psi_\theta\eq\Psi_{e_d,v_\theta}\) & characteristic exponent of an increment and its canonical form\\
\(h=2^{-n}\),\ \(\delta\eq h^{1/\alpha}\) & cell scale and threshold separating large and small jumps\\
\(\mathcal Q_n\),\ \(Q\) & covering of the sphere by cells at scale \(h\), and one such cell\\
\(A(Q)\),\ \(p_Q\) & directions whose great spheres meet \(Q\), and the measure of this set\\
\(M_Q\),\ \(V_Q\) & number of relevant large jumps and total variation of the corresponding small jumps\\
\(\mathcal H^s\),\ \(\dimH\) & Hausdorff measure and Hausdorff dimension\\
\(I_s(\mu)\) & \(s\)-energy of the measure \(\mu\)\\
\bottomrule
\end{tabular}
\end{center}

\subsection{Structure of the paper}\label{sec:structure}

The logical dependencies are shown in Fig.~\ref{fig:proof-map}.
Section~\ref{sec:setup} states the main result and outlines the proof.
Section~\ref{sec:parametrization} relates the extreme points to the image of
the field \(X\) and introduces its Poisson representation. The lower and upper
bounds are proved in Sections~\ref{sec:lower} and~\ref{sec:upper},
respectively. Section~\ref{sec:completion} combines the bounds and proves
finiteness of the critical Hausdorff measure, while
Section~\ref{sec:discussion} discusses the role of the uniform spectral
measure and open problems. Appendix~\ref{app:technical} contains a geometric
explanation of the tangential projection and two computations omitted from the
main text; Appendix~\ref{app:standard} records the standard facts used in the
proof.

\begin{figure}[ht]
\centering
\begin{tikzpicture}[
    x=1cm,y=1cm,
    box/.style={draw,rounded corners,align=center,inner sep=3pt,
                text width=26mm,font=\footnotesize},
    wide/.style={box,text width=34mm},
    arr/.style={-{Stealth[length=2mm]},thick}
]
\node[wide] (main) at (0,7.4) {Theorem~\ref{thm:main}\\\(\dimH F_\alpha=(d-1)\alpha\)};
\node[box] (low)  at (-3.4,6.0) {Theorem~\ref{thm:lower}\\lower bound};
\node[box] (up)   at (3.4,6.0)  {Theorem~\ref{thm:upper}\\upper bound};
\node[box] (par)  at (0,4.3)    {Lemma~\ref{lem:parametrization}\\parametrization};
\node[box] (neg)  at (-3.4,4.6) {Proposition~\ref{prop:negative-moments}\\negative moments};
\node[box] (cov)  at (3.4,4.6)  {Lemma~\ref{lem:adaptive-cover}\\adaptive covering};
\node[box] (den)  at (-3.4,3.2) {Lemma~\ref{lem:bounded-densities}\\densities};
\node[box] (arr2) at (1.4,3.2)  {Lemma~\ref{lem:arrangement}\\sign strata};
\node[box] (sml)  at (5.2,3.2)  {Lemma~\ref{lem:small-jumps}\\small jumps};
\node[box] (tan)  at (-3.4,1.8) {Lemma~\ref{lem:tangent}\\non-degeneracy};
\node[box] (blt)  at (5.2,1.8)  {Lemma~\ref{lem:belt}\\belt estimate};
\node[box] (chr)  at (-3.4,0.4) {Lemma~\ref{lem:characteristic}\\char. exponent};
\draw[arr] (low)--(main);
\draw[arr] (up)--(main);
\draw[arr] (par)--(low);
\draw[arr] (par)--(up);
\draw[arr] (neg)--(low);
\draw[arr] (cov)--(up);
\draw[arr] (den)--(neg);
\draw[arr] (tan)--(den);
\draw[arr] (chr)--(tan);
\draw[arr] (arr2)--(cov);
\draw[arr] (sml)--(cov);
\draw[arr] (blt)--(sml);
\end{tikzpicture}
\caption{Logical structure of the proof. Each arrow points from a result to
the statement that uses it.}\label{fig:proof-map}
\end{figure}

\section{Problem setting and main result}\label{sec:setup}

\subsection{Random countable stable zonotope}

Let \(\tau_1,\tau_2,\ldots\) be independent standard exponential random
variables, let \(\Gamma_k=\tau_1+\cdots+\tau_k\), and let
\(\eps_1,\eps_2,\ldots\) be independent uniform points of \(\Sph\),
independent of the sequence \((\tau_k)\). Set
\(\gamma_k\eq\Gamma_k^{-1/\alpha}\). By the strong law of large numbers,
\(\Gamma_k/k\to1\) almost surely. Hence \(\gamma_k\sim k^{-1/\alpha}\), and,
since \(1/\alpha>1\),
\[
    \sum_{k=1}^{\infty}\gamma_k<\infty
    \qquad\text{almost surely}.
\]
Consequently, the partial Minkowski sums
\(\bigoplus_{k\leq n}\gamma_k[0,\eps_k]\) converge in the Hausdorff metric to
the random compact convex set
\begin{equation}\label{eq:zonotope}
    Z_\alpha=\bigoplus_{k=1}^{\infty}\gamma_k[0,\eps_k].
\end{equation}

This is one of the standard LePage series in the cone of compact convex sets;
the resulting random compact set is strictly
\(\alpha\)-stable~\nbcite{davydov2000,davydov2008}.

\begin{remark}[role of the condition \(\alpha<1\)]\label{rem:alpha-range}
The restriction \(0<\alpha<1\) is not merely technical. By the theorem of
Gin{\'e} and Hahn~\nbcite{gine1985} (see also
\nbcite[Theorem~2.15]{molchanov2005}), every \(\alpha\)-stable random compact
convex set with \(1\leq\alpha\leq2\) can be represented as
\(K\oplus\{\xi\}\), where \(K\) is a deterministic compact convex set and
\(\xi\) is an \(\alpha\)-stable random vector. Its shape is deterministic, so
no random fractal boundary geometry arises for \(\alpha\geq1\). Moreover, when
\(\alpha<1\),
\(\int_0^1 r\,\alpha r^{-1-\alpha}\dd r=\alpha/(1-\alpha)<\infty\), and the
small-jump part of the corresponding Poisson integral therefore has finite
variation and requires no compensation~\nbcite{samorodnitsky1994,bertoin1996}.
\end{remark}

\begin{definition}[extreme point]\label{def:extreme}
A point \(x\) of a compact convex set \(K\subset\R^d\) is called
\emph{extreme} if \(x=ty+(1-t)z\), with \(y,z\in K\) and \(t\in(0,1)\),
implies \(y=z=x\). The set of extreme points is denoted by \(\ext K\).
\end{definition}

\begin{definition}[exposed point]\label{def:exposed}
A point \(x\) of a compact convex set \(K\subset\R^d\) is called
\emph{exposed} if there exists \(u\in\Sph\) such that the linear functional
\(y\mapsto\langle u,y\rangle\) attains its maximum over \(K\) only at \(x\).
The set of exposed points is denoted by \(\expo K\).
\end{definition}

\begin{remark}[exposed points are extreme]\label{rem:exposed-extreme}
Every exposed point of a compact convex set is extreme. Equivalently, every
compact convex set \(K\subset\R^d\) satisfies
\[
    \expo K\subset\ext K.
\]
A proof of this standard fact is given in
Proposition~\ref{prop:exposed-implies-extreme}.
\end{remark}

\begin{remark}[the reverse inclusion fails]\label{rem:extreme-not-exposed}
In general, an extreme point need not be exposed:
\[
    \ext K\not\subset\expo K.
\]
A counterexample, its proof, and an illustration are given in
Proposition~\ref{prop:extreme-not-exposed}.
\end{remark}

Our main object is the random set \(F_\alpha\eq\ext Z_\alpha\).

\subsection{Dimension and energy}

For \(A\subset\R^d\), \(s>0\), and \(\delta>0\), set
\[
    \mathcal H^s_\delta(A)
    =
    \inf\left\{
        \sum_j(\diam U_j)^s:
        A\subset\bigcup_j U_j,\quad
        \varnothing\neq U_j\subset\R^d,\quad
        \diam U_j\leq\delta
    \right\},
\]
where the infimum is taken over all countable families of nonempty sets
\(U_j\subset\R^d\). We write
\(\mathcal H^s(A)=\lim_{\delta\downarrow0}\mathcal H^s_\delta(A)\) for the
Hausdorff measure and
\(\dimH A=\inf\{s>0:\mathcal H^s(A)=0\}\) for the Hausdorff dimension. We
shall use monotonicity and countable stability of Hausdorff
dimension~\nbcite{falconer2014,mattila1995}.

\begin{definition}[energy of a measure]\label{def:energy}
Let \(\mu\) be a finite Borel measure on \(\R^d\) and let \(s>0\). Its
\emph{\(s\)-energy} is
\[
    I_s(\mu)
    \eq
    \iint\|x-y\|^{-s}\,\mu(\dd x)\,\mu(\dd y)\in[0,\infty].
\]
\end{definition}

Energy and dimension are related by Frostman's energy criterion: if a finite
nonzero Borel measure \(\mu\) is supported on a set \(A\) and
\(I_s(\mu)<\infty\), then \(\dimH A\geq s\). The precise statement and a
reference are given in Proposition~\ref{prop:frostman}.

\begin{theorem}[Main result]\label{thm:main}
Let \(d\geq2\), \(0<\alpha<1\), and suppose that the directions \(\eps_k\)
are uniformly distributed on \(\Sph\). Then, almost surely,
\[
    \dimH F_\alpha=(d-1)\alpha.
\]
\end{theorem}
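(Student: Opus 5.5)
The proof splits into a lower bound \(\dimH F_\alpha\geq(d-1)\alpha\) and an upper bound \(\dimH F_\alpha\leq(d-1)\alpha\), both formulated through the field \(X(u)=\sum_k\gamma_k\eps_k\1_{\{\langle u,\eps_k\rangle>0\}}\).

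\textbf{Step 1: parametrization.} First I relate the extreme points to the image of \(X\). For \(u\in\Sph\) with \(\langle u,\eps_k\rangle\neq0\) for all \(k\) (a set of full \(\sigma\)-measure, since the \(\eps_k\) are countably many), the functional \(\langle u,\cdot\rangle\) is maximized on \(Z_\alpha\) exactly at the sum of the maximizers on the segments. That sum is \(X(u)\), so \(X(u)\in\expo Z_\alpha\subset F_\alpha\) by Remark~\ref{rem:exposed-extreme}. Conversely, every extreme point of a Minkowski sum is a sum of extreme points of the summands. Hence every \(x\in F_\alpha\) has the form \(\sum_k\gamma_k\eps_k\1_{\{k\in S\}}\) for a sign pattern \(S\) that is a limit of patterns \(\{k:\langle u_j,\eps_k\rangle>0\}\). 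This gives \(F_\alpha\subset\overline{X(\Sph)}\), where \(X\) is extended by one-sided limits across the great spheres \(\eps_k^\perp\).

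\textbf{Step 2: lower bound.} Take \(\mu=X_\#\lambda\), where \(\lambda\) is the uniform measure on a geodesic ball \(U\). I will show \(\E I_s(\mu)<\infty\) for every \(s<(d-1)\alpha\); Frostman's criterion (Proposition~\ref{prop:frostman}) then gives the bound almost surely. By Fubini it suffices that
\[
    \E\|X(u)-X(v)\|^{-s}\leq C\,d_S(u,v)^{-s/\alpha},
\]
because \(\iint d_S(u,v)^{-s/\alpha}\,\lambda(\dd u)\lambda(\dd v)<\infty\) whenever \(s/\alpha<d-1\).

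The increment \(\Delta(u,v)\) is a Poisson sum over the directions \(\eps_k\) in the lune where the signs of \(\langle u,\cdot\rangle\) and \(\langle v,\cdot\rangle\) differ. This lune has \(\sigma\)-measure \(\asymp\theta=d_S(u,v)\). Therefore \(\Delta(u,v)\) is a strictly \(\alpha\)-stable vector whose spectral measure has mass \(\asymp\theta\), so \(\Delta\) has scale \(\theta^{1/\alpha}\). Its spectral measure is concentrated near the great sphere \(u^\perp\cap v^\perp\) of the lune, which is \((d-2)\)-dimensional. It is therefore not full-dimensional in a naive sense, so I project onto \(u^\perp\) to obtain \(Y=P_u\Delta\).

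By rotation invariance, the characteristic exponent \(\Psi_\theta\) of the rescaled vector \(\theta^{-1/\alpha}\Delta\) has a nondegenerate limit as \(\theta\to0\). This is the statement that the limiting spectral measure, the uniform measure on the great sphere \(e_d^\perp\cap v^\perp\) together with \(\pm\) orientation effects, spans \(d-1\) dimensions in the tangential projection. From this I obtain the bound \(\operatorname{Re}\Psi_\theta(\xi)\geq c|\xi|^\alpha\) uniformly in small \(\theta\). That bound yields bounded densities for the rescaled \(Y\), and hence the negative moment \(\E\|Y\|^{-s}\leq C\theta^{-s/\alpha}\) for \(s<d-1\). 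Finally \(\|\Delta\|\geq\|Y\|\), which gives the displayed estimate.

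\textbf{Step 3: upper bound by covering.} At scale \(h=2^{-n}\) I cover \(\Sph\) by \(\asymp h^{-(d-1)}\) cells \(Q\) of diameter \(h\), and set \(\delta=h^{1/\alpha}\). Jumps with \(\gamma_k>\delta\) are called large; there are about \(h^{-1}\) of them. The large jumps whose great spheres meet \(Q\) number \(M_Q\), a Poisson variable with mean \(\asymp h^{-1}p_Q\asymp1\). Their great spheres cut \(Q\) into at most \(C(1+M_Q)^{d-1}\) sign strata, and on each stratum the large-jump part of \(X\) is constant.

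The small jumps move \(X\) by at most \(V_Q\), the sum of \(\gamma_k\le\delta\) over \(\eps_k\in A(Q)\). Its mean is \(\E V_Q\asymp p_Q\int_0^\delta r\,\alpha r^{-1-\alpha}\dd r\asymp h\,\delta^{1-\alpha}=\delta\). Each stratum image, closure included, thus has diameter at most \(V_Q\). This gives
\[
    \E\mathcal H^{(d-1)\alpha}_{\cdot}\leq\sum_Q\E\bigl[C(1+M_Q)^{d-1}V_Q^{(d-1)\alpha}\bigr]\leq Ch^{-(d-1)}\delta^{(d-1)\alpha}=C,
\]
using Hölder's inequality and moments of \(M_Q\) and \(V_Q\); note that \(V_Q\) has all moments because it is truncated at \(\delta\). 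Fatou's lemma then gives \(\mathcal H^{(d-1)\alpha}(F_\alpha)<\infty\) almost surely. One needs \(\sup_Q V_Q\to0\) so that the covers are admissible; this follows from a maximal bound over the polynomially many cells via the exponential moments of \(V_Q/\delta\).

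\textbf{Main obstacle.} I expect the hardest step to be the uniform nondegeneracy of \(\Psi_\theta\) as \(\theta\to0\), that is, the tangential density bound in Step 2. The Step 1 closure issue of non-exposed extreme points is a secondary, more technical point.
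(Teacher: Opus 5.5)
Your plan follows the paper's plan: parametrization by \(X\); a Frostman energy bound via the tangential projection \(Y=P_u\Delta\) with uniformly bounded densities; and an adaptive covering with \(\delta=h^{1/\alpha}\), sign strata of the large-jump arrangement, and small-jump variation \(V_Q\). Your upper bound is sound. Going straight to the critical exponent with Fatou merges Theorem~\ref{thm:upper} and Corollary~\ref{cor:finite-measure}. Admissibility needs no maximal inequality, since \(\sup_Q V_Q\leq\sum_{k:\gamma_k\leq\delta}\gamma_k\) is a tail of an a.s.\ convergent series.

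\textbf{The non-degeneracy step.} The step you call the crux is justified by an incorrect picture. You place the limiting spectral measure uniformly on \(u^\perp\cap v^\perp\), which for the canonical pair is \(\{x_{d-1}=x_d=0\}\). A measure carried there spans only \(d-2\) dimensions of \(e_d^\perp\) (nothing when \(d=2\)). It gives zero exponent at \(t=e_{d-1}\), the direction of the tilt from \(u\) to \(v\), which is exactly the degeneracy you must exclude. In fact the lune is a neighbourhood of the whole great sphere \(u^\perp\). Above \(\omega\in S^{N-1}\subset u^\perp\) its angular thickness is \(b_\theta(\omega)=\arctan(\tan\theta\,|\omega_N|)\asymp\theta|\omega_N|\), which vanishes exactly on the edge \(u^\perp\cap v^\perp\); the limit weight is \(|\omega_N|\,\dd\omega\), not uniform. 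Hence \(\Psi_\theta(t)\asymp\theta J(t)\) with \(J(t)=\int_{S^{N-1}}|\langle t,\omega\rangle|^\alpha|\omega_N|\,\dd\omega\). This \(J\) is continuous and strictly positive on unit vectors, because the weight charges an open subset of the full great sphere. Compactness and homogeneity then give \eqref{eq:psi-comparison} for all \(0<\theta\leq\pi/4\), not just in a limit. This is the computation of Lemma~\ref{lem:tangent} in Appendix~\ref{app:psi}. Two smaller points: rotation invariance only reduces to the canonical pair and does not yield non-degeneracy, and the rescaled exponent of \(\Delta\) itself does degenerate (along \(u\)), so your "nondegenerate limit" holds only after projecting.

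\textbf{The parametrization step.} In Step~1, the representation of extreme points as subset sums \(\sum_{k\in S}\gamma_k\eps_k\) is correct: if some coefficient lies in \((0,1)\), perturb it. But it does not imply that \(S\) is a pointwise limit of hemisphere patterns, since the subset-sum property alone does not single these out. That implication is precisely the inclusion \(F_\alpha\subset\overline{X(\Sph)}\) your upper bound relies on, so your "hence" is a non sequitur. The paper closes this in one line with Straszewicz's theorem (Proposition~\ref{prop:straszewicz}): \(\ext Z_\alpha\subset\overline{\expo Z_\alpha}=\overline{X(G)}\).

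A direct alternative also works. Almost surely every hyperplane through the origin contains at most \(d-1\) of the \(\eps_k\), so each face \(Z_\alpha(u)\) is a translate of a finite zonotope. An extreme point \(x\) is a vertex of it, exposed within it by some \(w\in u^\perp\). Then \(X\bigl((u+\epsilon w)/\|u+\epsilon w\|\bigr)\to x\) as \(\epsilon\downarrow0\) by dominated convergence.
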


\begin{remark}[consistency with boundary geometry]\label{rem:consistency}
Since \(0<\alpha<1\), the value \((d-1)\alpha<d-1\) is consistent with the
fact that \(F_\alpha\) lies on the boundary and
\(\mathcal H^{d-1}(F_\alpha)=0\) almost surely~\nbcite{davydov2019}.
Theorem~\ref{thm:main} gives the exact dimension, while
Corollary~\ref{cor:finite-measure} establishes finiteness of the critical
Hausdorff measure.
\end{remark}

\subsection*{Idea of the proof}

Both bounds are obtained from the field \(X\) on the sphere defined
in~\eqref{eq:field}.

The support face of the zonotope in direction \(u\) (see the definition in
Subsection~\ref{sec:support-faces}) is a singleton precisely when none of the
\(\eps_k\) is orthogonal to \(u\); in that case it equals \(X(u)\). By
Lemma~\ref{lem:parametrization}, the exposed points form \(X(G)\), while the
extreme points lie between \(X(G)\) and \(\overline{X(\Sph)}\). The problem is
thus reduced to two-sided dimension estimates for the image of \(X\).

For the \emph{lower bound}, the increment \(X(u)-X(v)\) must not become too
small. This increment is a stable vector to which only directions in the
symmetric difference of two hemispheres contribute, namely a spherical lune
of width comparable to \(\theta=d_S(u,v)\). Within the lune, the tangential
component of a direction is of order one, whereas its normal component is of
order \(\theta\). The full increment is therefore anisotropic, and after the
normalization \(\theta^{-1/\alpha}\) its normal component degenerates. The
tangential projection
\(Y(u,v)=P_u(X(u)-X(v))\) defined in
\eqref{eq:tangent-projection-poisson} removes this degeneracy: its
characteristic exponent is uniformly comparable to \(\theta\|t\|^\alpha\)
in all tangent directions \(t\), so the normalized projection has a uniformly
bounded density in the \(N\)-dimensional tangent space. This yields negative
moment estimates, finiteness of the \(s\)-energy of the image of surface
measure for \(s<N\alpha\), and the lower bound by Frostman's criterion.

For the \emph{upper bound}, a direct oscillation estimate for the field is
insufficient. The total oscillation on a cell \(Q\) of scale \(h\) from
\eqref{eq:chart-cover} has a tail of order \(\alpha\), because it may be caused
by a single large jump, and summing such estimates over \(h^{-N}\) cells does
not give the desired exponent. A finite number of large jumps merely
\emph{cuts} the parameter space and does not \emph{stretch} the images of its
individual pieces. Under the central projection from Subsection~\ref{sec:charts},
great spheres become affine hyperplanes. Thus the large jumps determine a
finite hyperplane arrangement, whose number of sign strata is controlled by
Lemma~\ref{lem:arrangement}. The large-jump contribution is constant on each
stratum, so the diameter of its image is bounded by the variation of the small
jumps. It remains to match the threshold \(\delta\) between large and small
jumps to the scale \(h\): the choice \(\delta=h^{1/\alpha}\) in
\eqref{eq:delta-scale} gives
\(\E M_Q\leq C\) and \(\E V_Q^{\,s}\leq C h^{s/\alpha}\), where \(M_Q,V_Q\)
are introduced in Subsection~\ref{sec:large-small}. Summation over the
\(h^{-N}\) cells converges exactly when \(s>N\alpha\).

The same calculation at \(s=N\alpha\) gives a uniformly bounded, though not
vanishing, expectation. Fatou's lemma then yields finiteness of the critical
Hausdorff measure.

\section{Geometric parametrization}\label{sec:parametrization}

\subsection{Support faces}\label{sec:support-faces}

For a compact convex set \(K\subset\R^d\), define its support function and
support face by
\[
    h_K(u)=\max_{x\in K}\langle u,x\rangle,
    \qquad
    K(u)=\{x\in K:\langle u,x\rangle=h_K(u)\}.
\]
For two compact convex sets,
\begin{equation}\label{eq:support-additivity}
    h_{A\oplus B}(u)=h_A(u)+h_B(u),
    \qquad
    (A\oplus B)(u)=A(u)\oplus B(u);
\end{equation}
both identities follow immediately from the definition of the maximum of a
linear functional.

Define the field
\begin{equation}\label{eq:field}
    X(u)=\sum_{k=1}^{\infty}
    \gamma_k\eps_k\1_{\{\langle u,\eps_k\rangle>0\}},
    \qquad u\in\Sph.
\end{equation}
The series converges absolutely and uniformly in \(u\), since the norm of its
\(k\)th term is at most \(\gamma_k\). The map
\((\omega,u)\mapsto X(\omega,u)\) is jointly measurable as the pointwise
limit of jointly measurable partial sums.

For a fixed realization of the directions, set
\[
    G=\Sph\setminus\bigcup_{k=1}^{\infty}\eps_k^\perp,
    \qquad
    \eps_k^\perp=\{u\in\Sph:\langle u,\eps_k\rangle=0\}.
\]
The set \(G\) of \emph{good directions} has full \(\sigma\)-measure because
every equator \(\eps_k^\perp\) has measure zero.

\begin{lemma}[parametrization of the exposed points]\label{lem:parametrization}
Almost surely, the following relations hold:
\begin{equation}\label{eq:sandwich}
    X(G)=\expo Z_\alpha
    \subset F_\alpha
    \subset\overline{\expo Z_\alpha}
    \subset\overline{X(\Sph)}.
\end{equation}
\end{lemma}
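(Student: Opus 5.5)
The plan is to establish the four relations in \eqref{eq:sandwich} one at a time, working on the almost sure event where \(\sum_k\gamma_k<\infty\). All of them reduce to the support-face additivity \eqref{eq:support-additivity}, extended to countable sums, together with two classical facts: Remark~\ref{rem:exposed-extreme}, and Straszewicz's theorem that \(\ext K\subset\overline{\expo K}\) for compact convex \(K\subset\R^d\).

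\textbf{Step 1: the support face of \(Z_\alpha\).} First I will extend \eqref{eq:support-additivity} to the infinite sum. For the partial sums \(Z^{(n)}=\bigoplus_{k\le n}\gamma_k[0,\eps_k]\) we have \(h_{Z^{(n)}}\to h_{Z_\alpha}\) uniformly, so
\[
    h_{Z_\alpha}(u)=\sum_k\gamma_k\max(\langle u,\eps_k\rangle,0).
\]
Now take \(x\in Z_\alpha(u)\). It is a limit of points \(x_n=\sum_{k\le n}\gamma_k t_k^{(n)}\eps_k\) with \(t_k^{(n)}\in[0,1]\). A diagonal compactness argument lets us write \(x=\sum_k\gamma_k t_k\eps_k\) with \(t_k\in[0,1]\); the exchange of limit and sum is justified by dominated convergence, since \(\sum_k\gamma_k<\infty\). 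Then
\[
    h_{Z_\alpha}(u)-\langle u,x\rangle=\sum_k\gamma_k\bigl(\max(\langle u,\eps_k\rangle,0)-t_k\langle u,\eps_k\rangle\bigr).
\]
Every term on the right is nonnegative, so each term must vanish. This forces \(t_k=1\) when \(\langle u,\eps_k\rangle>0\) and \(t_k=0\) when \(\langle u,\eps_k\rangle<0\), while \(t_k\) is free when \(\langle u,\eps_k\rangle=0\). Conversely, every such choice of \(t_k\) gives a point of \(Z_\alpha(u)\). Hence
\[
    Z_\alpha(u)=X(u)+\sum_{k:\langle u,\eps_k\rangle=0}\gamma_k[0,\eps_k].
\]

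\textbf{Step 2: \(X(G)=\expo Z_\alpha\).} By Step 1, \(Z_\alpha(u)=\{X(u)\}\) whenever \(u\in G\), so \(X(G)\subset\expo Z_\alpha\). For the reverse inclusion, suppose \(x\) is exposed by some \(u\). If \(u\notin G\), some \(\eps_k\perp u\) appears with \(\gamma_k>0\). The face then contains the nondegenerate segment \(X(u)+\gamma_k[0,\eps_k]\), so it is not a singleton, which contradicts exposedness. Therefore \(u\in G\) and \(x=X(u)\).

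\textbf{Step 3: the remaining inclusions.} The inclusion \(\expo Z_\alpha\subset F_\alpha\) is Remark~\ref{rem:exposed-extreme}. The inclusion \(F_\alpha\subset\overline{\expo Z_\alpha}\) is Straszewicz's theorem. I will cite it, or prove it via the standard farthest-point argument: a farthest point of \(K\) from any external point is exposed, and such points accumulate at any extreme point. Finally, \(\overline{\expo Z_\alpha}=\overline{X(G)}\subset\overline{X(\Sph)}\) follows from Step 2.

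The main obstacle is the rigorous passage to the infinite Minkowski sum in Step 1, namely showing that every point of \(Z_\alpha\) has a representation \(\sum_k\gamma_k t_k\eps_k\). I will handle it by defining the map \((t_k)\mapsto\sum_k\gamma_k t_k\eps_k\) on \([0,1]^{\mathbb N}\) with the product topology. This map is continuous because the tails \(\sum_{k>n}\gamma_k\) vanish, so its image is compact and convex. That image contains every \(Z^{(n)}\) and lies within Hausdorff distance \(\sum_{k>n}\gamma_k\) of \(Z^{(n)}\), so it equals \(Z_\alpha\). Everything in Steps 2 and 3 is then routine.
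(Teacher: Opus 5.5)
Your proof is correct. Its overall structure matches the paper's: singleton support faces for \(u\in G\), a nondegenerate segment in the face for \(u\notin G\), then Remark~\ref{rem:exposed-extreme} and Straszewicz's theorem.

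Where you differ is in how the support face is computed.

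\textbf{The paper's route.} It never describes individual points of the infinite Minkowski sum. For \(u\in G\) it splits \(Z_\alpha=Z_n\oplus R_n\) and uses finite additivity \eqref{eq:support-additivity} to see that \(Z_n(u)\) is a singleton. It then bounds \(\diam Z_\alpha(u)\le\diam R_n\le\sum_{k>n}\gamma_k\to0\). For \(u\notin G\) it uses the two-term split \(Z_\alpha=\gamma_k[0,\eps_k]\oplus Z^{(k)}\).

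\textbf{Your route.} You first show that \(Z_\alpha\) is the image of the Hilbert cube \([0,1]^{\mathbb N}\) under the continuous affine map \((t_k)\mapsto\sum_k\gamma_k t_k\eps_k\). Your justification is sound: summable tails give continuity, Tychonoff gives compactness, and Hausdorff limits are unique. From this you obtain the exact face \(Z_\alpha(u)=X(u)+\sum_{k:\langle u,\eps_k\rangle=0}\gamma_k[0,\eps_k]\) by a termwise nonnegativity argument.

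\textbf{Comparison.} Your route costs one extra lemma but gives strictly more: an explicit description of every support face, not just a criterion for when it is a singleton. It also gives a rigorous basis for the splittings \(Z_\alpha=Z_n\oplus R_n\) and \(Z_\alpha=\gamma_k[0,\eps_k]\oplus Z^{(k)}\). The paper uses these without comment, and they tacitly rely on continuity of Minkowski addition in the Hausdorff metric. The paper's route is shorter and needs only finite additivity plus a tail-diameter estimate; identifying the unique point as \(X(u)\) is a one-line limit there.

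Your farthest-point sketch of Straszewicz's theorem passes over its nontrivial step, the accumulation at extreme points. Since you also offer to cite the theorem, as the paper does, this is not a gap.
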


\begin{proof}
Let \(u\in G\). Split the series~\eqref{eq:zonotope} into two compact sets:
write \(Z_n=\bigoplus_{k\leq n}\gamma_k[0,\eps_k]\) and
\(R_n=\bigoplus_{k>n}\gamma_k[0,\eps_k]\), so that
\(Z_\alpha=Z_n\oplus R_n\). On every segment
\(\gamma_k[0,\eps_k]\), the functional \(\langle u,\cdot\rangle\) attains
its maximum at a unique point: at \(\gamma_k\eps_k\) if
\(\langle u,\eps_k\rangle>0\), and at \(0\) otherwise. Thus
\(Z_n(u)\) is a singleton by~\eqref{eq:support-additivity}. Furthermore,
\(\diam R_n\leq\sum_{k>n}\gamma_k\), whence
\[
    \diam Z_\alpha(u)
    =\diam\bigl(Z_n(u)\oplus R_n(u)\bigr)
    \leq\diam Z_n(u)+\diam R_n(u)
    \leq\sum_{k>n}\gamma_k .
\]
The right-hand side tends to zero, so \(Z_\alpha(u)\) is a singleton. Its
unique point is the sum of the selected endpoints, namely \(X(u)\).

Now let \(u\notin G\), and suppose that
\(\langle u,\eps_k\rangle=0\) for some \(k\). Write
\(Z_\alpha=\gamma_k[0,\eps_k]\oplus Z^{(k)}\), where
\(Z^{(k)}=\bigoplus_{j\neq k}\gamma_j[0,\eps_j]\). By
\eqref{eq:support-additivity},
\(Z_\alpha(u)=\gamma_k[0,\eps_k]\oplus Z^{(k)}(u)\). This set contains a
translate of a nondegenerate segment and is therefore not a singleton. Hence
the exposing directions are precisely the elements of \(G\), proving the
first equality.

By Remark~\ref{rem:exposed-extreme}, every exposed point is extreme, whereas
Straszewicz's theorem (Proposition~\ref{prop:straszewicz}) states that the
exposed points are dense in the set of extreme points of a compact convex
set. Together with the equality just proved, this gives~\eqref{eq:sandwich}.
\end{proof}

\begin{remark}[passing to the closure]\label{rem:closure}
The rightmost inclusion in~\eqref{eq:sandwich} is the only place where a
closure appears, and it requires care: in general, taking the closure may
increase Hausdorff dimension, as shown by
\(\dimH\mathbb Q=0\) and \(\dimH\overline{\mathbb Q}=1\). Here this obstacle
is avoided at no cost. The coverings constructed in Section~\ref{sec:upper}
are finite, the closure of a finite union is the union of the closures, and
diameters are unchanged under closure. Thus the upper bound is proved directly
for \(\overline{X(\Sph)}\). The lower bound, in turn, will already be proved
for \(X(G)\).
\end{remark}

\subsection{Poisson representation}

The points \(\Gamma_k\) form a standard Poisson process on \((0,\infty)\)
with intensity \(\dd\Gamma\). The substitution
\(r=\Gamma^{-1/\alpha}\), or \(\Gamma=r^{-\alpha}\), transforms this measure
into \(\alpha r^{-1-\alpha}\dd r\). Independently marking the points by
directions with law \(\sigma\) shows that
\[
    \Pi=\sum_{k=1}^{\infty}\delta_{(\gamma_k,\eps_k)}
\]
is a Poisson random measure on \((0,\infty)\times\Sph\) with intensity
\begin{equation}\label{eq:intensity}
    \nu(\dd r,\dd x)
    =
    \alpha r^{-1-\alpha}\dd r\,\sigma(\dd x),
\end{equation}
and the series~\eqref{eq:field} has the integral form
\begin{equation}\label{eq:field-poisson}
    X(u)=
    \int_{(0,\infty)\times\Sph}
    r x\1_{\{\langle u,x\rangle>0\}}\,\Pi(\dd r,\dd x).
\end{equation}
\section{Lower bound}\label{sec:lower}

\subsection{Tangential projection of an increment}\label{sec:tangent-projection}

For \(u,v\in\Sph\), set
\[
    \Delta(u,v)\eq X(u)-X(v),
    \qquad
    a_{u,v}(x)
    \eq
    \1_{\{\langle u,x\rangle>0\}}
    -
    \1_{\{\langle v,x\rangle>0\}}.
\]
Then~\eqref{eq:field-poisson} gives
\[
    \Delta(u,v)
    =
    \int r x\,a_{u,v}(x)\,\Pi(\dd r,\dd x).
\]
This is a strictly \(\alpha\)-stable random vector. Let \(P_u\) denote the
orthogonal projection onto \(u^\perp\). The tangential projection of the
increment has the integral representation
\begin{equation}\label{eq:tangent-projection-poisson}
    Y(u,v)
    \eq
    P_u\Delta(u,v)
    =
    \int r\,P_ux\,a_{u,v}(x)\,\Pi(\dd r,\dd x).
\end{equation}
Since \(\|Y(u,v)\|\leq\|\Delta(u,v)\|\), it follows that, for every
\(\beta>0\),
\begin{equation}\label{eq:projection-negative}
    \|\Delta(u,v)\|^{-\beta}
    \leq
    \|Y(u,v)\|^{-\beta}.
\end{equation}

The geometric meaning of this projection and the reason for replacing the
full increment \(\Delta(u,v)\) by \(Y(u,v)\) are explained in greater detail
in Remark~\ref{rem:why-projection} of Appendix~\ref{app:technical}.

\begin{lemma}[characteristic exponent]\label{lem:characteristic}
For \(t\in u^\perp\),
\begin{equation}\label{eq:char-modulus}
    \bigl|\E e^{i\langle t,Y(u,v)\rangle}\bigr|
    =
    \exp\{-c_\alpha\Psi_{u,v}(t)\},
    \qquad
    \Psi_{u,v}(t)
    \eq
    \int_{\Sph}
    |\langle t,x\rangle|^\alpha\,|a_{u,v}(x)|\,\sigma(\dd x),
\end{equation}
where
\(c_\alpha=\alpha\int_0^\infty(1-\cos s)s^{-1-\alpha}\dd s>0\).
\end{lemma}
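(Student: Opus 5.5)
This is the Lévy–Khintchine (Campbell) formula for a Poisson integral. We compute the characteristic function exactly and then take its modulus. Fix \(t\in u^\perp\) and put \(f(r,x)\eq r\langle t,P_ux\rangle a_{u,v}(x)=r\langle t,x\rangle a_{u,v}(x)\); the last equality holds because \(P_u\) is self-adjoint and \(P_ut=t\). Then \(\langle t,Y(u,v)\rangle=\int f\,\dd\Pi\). Since \(|f|\leq r\|t\|\) and \(\int\min(1,r)\,\nu(\dd r,\dd x)<\infty\) for \(\alpha<1\) (Remark~\ref{rem:alpha-range}), the integral converges absolutely almost surely. Campbell's theorem for Poisson random measures (Appendix~\ref{app:standard}) then gives
\[
    \E e^{i\langle t,Y(u,v)\rangle}
    =\exp\Bigl\{-\int\bigl(1-e^{if(r,x)}\bigr)\,\nu(\dd r,\dd x)\Bigr\}.
\]
To justify this, we would first apply the formula to the truncation \(r>\eta\), where \(\nu\) is finite and the formula is elementary. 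We would then let \(\eta\downarrow0\), using dominated convergence on both sides; the bound \(|1-e^{if}|\leq\min(2,r\|t\|)\) is \(\nu\)-integrable.

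The modulus of \(\exp\{-z\}\) equals \(\exp\{-\operatorname{Re}z\}\), so only the real part \(\int(1-\cos f)\,\dd\nu\) matters. The imaginary part, which involves \(\sin\) and carries the sign of \(a_{u,v}\), disappears at this step. This is why the lemma is stated for the modulus rather than for the characteristic function itself.

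Next fix \(x\). If \(a_{u,v}(x)=0\), the integrand vanishes; this is consistent with the factor \(|a_{u,v}(x)|\in\{0,1\}\). Otherwise \(a_{u,v}(x)=\pm1\), and with \(b=|\langle t,x\rangle|\) the evenness of \(\cos\) gives \(1-\cos f=1-\cos(rb)\). For \(b>0\), the substitution \(s=rb\) yields
\[
    \int_0^\infty(1-\cos(rb))\,\alpha r^{-1-\alpha}\dd r=b^\alpha\,\alpha\int_0^\infty(1-\cos s)s^{-1-\alpha}\dd s=c_\alpha b^\alpha .
\]
For \(b=0\), both sides are \(0\). Here \(0<c_\alpha<\infty\), because the integrand behaves like \(s^{1-\alpha}/2\) near \(0\) and is bounded by \(2s^{-1-\alpha}\) at infinity. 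Tonelli's theorem applies since the integrand is nonnegative. Integrating over \(x\) against \(\sigma\) therefore gives \(\operatorname{Re}\int(1-e^{if})\,\dd\nu=c_\alpha\Psi_{u,v}(t)\), which is \eqref{eq:char-modulus}.

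The only step needing care is the justification of the exponential formula without compensation. This is exactly where \(\alpha<1\) enters, through the integrability of \(\min(1,r)\) against \(\alpha r^{-1-\alpha}\dd r\). Everything else is a direct calculation.
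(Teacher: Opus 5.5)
Your proposal is correct and follows the paper's proof. You apply the exponential formula to \(\langle t,Y(u,v)\rangle=\int r\langle t,x\rangle a_{u,v}(x)\,\Pi(\dd r,\dd x)\), take real parts, and evaluate the radial integral by scaling using \(|a_{u,v}|\in\{0,1\}\); your truncation/dominated-convergence argument and the check \(0<c_\alpha<\infty\) spell out steps the paper leaves implicit. One minor slip: Appendix~\ref{app:standard} does not contain Campbell's theorem, so cite a standard source for it or rely on your truncation argument from the finite-intensity case.
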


\begin{proof}
Since \(t\in u^\perp\), we have
\(\langle t,P_ux\rangle=\langle t,x\rangle\). Hence
\eqref{eq:tangent-projection-poisson} implies
\[
    \langle t,Y(u,v)\rangle
    =
    \int r\langle t,x\rangle a_{u,v}(x)\,\Pi(\dd r,\dd x).
\]
The exponential formula for a Poisson random measure gives
\[
    \log\E e^{i\langle t,Y(u,v)\rangle}
    =
    \int_0^\infty\!\!\int_{\Sph}
    \left(
        e^{ir\langle t,x\rangle a_{u,v}(x)}-1
    \right)
    \alpha r^{-1-\alpha}\sigma(\dd x)\dd r.
\]
Taking real parts and using the identity
\[
    \alpha\int_0^\infty
    \bigl(1-\cos(ra)\bigr)r^{-1-\alpha}\dd r
    =
    c_\alpha|a|^\alpha
\]
with \(a=\langle t,x\rangle a_{u,v}(x)\), we obtain
\eqref{eq:char-modulus}. Here we used
\(a_{u,v}\in\{-1,0,1\}\), and hence
\(|a_{u,v}|^\alpha=|a_{u,v}|\).
\end{proof}

\begin{remark}[asymmetry of the increment]\label{rem:asymmetry}
In general, the law of \(\Delta(u,v)\) is not symmetric. Indeed,
\(a_{u,v}(-x)=-a_{u,v}(x)\), so
\((-x)a_{u,v}(-x)=x\,a_{u,v}(x)\): antipodal directions make equal rather
than opposite contributions. For this reason, the representation
\(\E e^{i\langle t,S\rangle}=e^{-\Psi(t)}\) with real-valued \(\Psi\) does
not apply here, and Lemma~\ref{lem:characteristic} is deliberately stated for
the modulus of the characteristic function. This is sufficient for the
density and negative-moment estimates.
\end{remark}

\subsection{Uniform tangential non-degeneracy}

Let \(\theta=d_S(u,v)\). By rotational invariance of \(\sigma\), it suffices
to consider the canonical pair
\[
    u=e_d,
    \qquad
    v_\theta=\cos\theta\,e_d+\sin\theta\,e_{d-1}.
\]
Indeed, if \(R\in O(d)\) maps \((u,v)\) to \((e_d,v_\theta)\), then
\(\{\eps_k\}\overset{d}{=}\{R^{-1}\eps_k\}\), and consequently
\(Y(u,v)\overset{d}{=}R^{-1}Y(e_d,v_\theta)\); in particular, the norms have
the same distribution. Identify \(e_d^\perp\) with \(\R^N\) and write
\(\Psi_\theta\eq\Psi_{e_d,v_\theta}\).

\begin{lemma}[tangential non-degeneracy]\label{lem:tangent}
There exist constants \(0<c\leq C<\infty\), depending only on \(d\) and
\(\alpha\), such that, for all \(0<\theta\leq\pi/4\) and all
\(t\in e_d^\perp\),
\begin{equation}\label{eq:psi-comparison}
    c\,\theta\|t\|^\alpha
    \leq
    \Psi_\theta(t)
    \leq
    C\,\theta\|t\|^\alpha.
\end{equation}
\end{lemma}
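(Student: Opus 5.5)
By homogeneity of degree \(\alpha\) in \(t\), it suffices to prove \eqref{eq:psi-comparison} for unit vectors \(t\in e_d^\perp\), \(\|t\|=1\). The plan is to describe the lune \(L_\theta=\{x:|a_{e_d,v_\theta}(x)|=1\}\) explicitly and then integrate \(|\langle t,x\rangle|^\alpha\) over it. In the coordinates \(x=(x_1,\dots,x_{d-2},x_{d-1},x_d)\), write \((x_{d-1},x_d)=\rho(\cos\phi,\sin\phi)\), where \(\rho=\sqrt{x_{d-1}^2+x_d^2}\). The signs of \(x_d\) and of \(\langle v_\theta,x\rangle=\rho\sin(\phi+\theta)\) differ exactly when \(\phi\) lies in one of two arcs of length \(\theta\): an arc \((-\theta,0)\) and its antipodal arc \((\pi-\theta,\pi)\) (up to endpoints). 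Hence \(L_\theta\) is the set of \(x\) whose angle \(\phi\) in the \((e_{d-1},e_d)\)-plane lies in these arcs. Since the normalized uniform measure disintegrates as a product of a density in \((x_1,\dots,x_{d-2},\rho)\) and the uniform measure \(\dd\phi/(2\pi)\) on the angle, the measure \(\sigma(L_\theta)\) equals \(\theta/\pi\).

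The upper bound follows immediately: \(|\langle t,x\rangle|^\alpha\leq1\), so \(\Psi_\theta(t)\leq\sigma(L_\theta)=\theta/\pi\).

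For the lower bound, decompose \(t=t'+\tau e_{d-1}\) with \(t'\in\operatorname{span}(e_1,\dots,e_{d-2})\), so that \(\|t'\|^2+\tau^2=1\). On \(L_\theta\) we have \(|x_d|=\rho|\sin\phi|\leq\rho\sin\theta\) and \(|x_{d-1}|\geq\rho\cos\theta\). Thus the points of the lune are essentially points of the great subsphere \(e_d^\perp\), perturbed by \(O(\theta)\). The key step is to find a subset \(B\subset L_\theta\) with \(\sigma(B)\geq c\theta\) on which \(|\langle t,x\rangle|\geq c\). I will use the product structure: conditioning on \(\phi\in(-\theta,0)\), the vector \(w=(x_1,\dots,x_{d-2},\rho)\) has a fixed law, namely the law of \((x_1,\dots,x_{d-2},\sqrt{x_{d-1}^2+x_d^2})\) under \(\sigma\), independent of \(\phi\). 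Then
\[
    \langle t,x\rangle=\langle t',x'\rangle+\tau\rho\cos\phi ,
\]
and for \(\phi\in(-\theta,0)\) with \(\theta\leq\pi/4\) the factor \(\cos\phi\) lies in \([\cos(\pi/4),1]\).

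It then suffices to show that
\[
    \inf_{\|t\|=1,\ \phi\in[-\pi/4,0]}
    \E\,\bigl|\langle t',x'\rangle+\tau\rho\cos\phi\bigr|^\alpha>0 .
\]
This is a continuous function on a compact set, so a positive infimum follows once each value is positive. For fixed \(t\) and \(\phi\) the expression is \(\E|\langle s,w\rangle|^\alpha\) with \(s=(t',\tau\cos\phi)\neq0\), and \(w\) has a law with a density on an open set of \(\R^{d-1}\) (for \(d=2\), \(w=\rho=1\) and the claim is trivial). Hence \(\langle s,w\rangle\neq0\) with positive probability, and the expectation is positive. Integrating over \(\phi\) in both arcs gives \(\Psi_\theta(t)\geq c\,\theta\), which is the lower bound in \eqref{eq:psi-comparison}.

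The main obstacle is making the disintegration of \(\sigma\) in the \((x_{d-1},x_d)\)-plane precise, that is, checking that \(\phi\) is uniform and independent of \(w\). This holds by rotational invariance within that plane. Continuity of the expectation in \((t,\phi)\) follows from dominated convergence, since the integrand is bounded by \(1\).
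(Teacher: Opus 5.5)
Your proof is correct, and it takes a different route from the paper's.

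\textbf{The paper's route.} It uses latitude coordinates adapted to \(u=e_d\), namely \(x=(\cos\varphi\,\omega,\sin\varphi)\) with \(\omega\in S^{N-1}\). In these coordinates the lune has an \(\omega\)-dependent thickness \(b_\theta(\omega)=\arctan(\tan\theta\,|\omega_N|)\), which must be estimated by \(\theta|\omega_N|\). This gives \(\Psi_\theta(t)\asymp\theta J(t)\) with the \(\theta\)-free integral \(J(t)=\int_{S^{N-1}}|\langle t,\omega\rangle|^\alpha|\omega_N|\,\dd\omega\). Compactness in \(t\) alone then finishes the proof.

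\textbf{Your route.} You take polar coordinates in the plane spanned by \(u\) and \(v_\theta\). There the lune is exactly a product set: two antipodal arcs of length \(\theta\) in \(\phi\), with \(w\) arbitrary, and \(\phi\) uniform and independent of \(w\). This gives the exact identity \(\sigma(L_\theta)=\theta/\pi\) and the upper bound with explicit constant \(C=1/\pi\), with no trigonometric estimates. It also gives an exact formula \(\Psi_\theta(t)=\frac1{2\pi}\int_{A_\theta}g_t(\phi)\,\dd\phi\), where \(A_\theta\) is the union of the two arcs and \(g_t(\phi)=\E|\langle t',x'\rangle+\tau\rho\cos\phi|^\alpha\). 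The cost is that your compactness argument runs over pairs \((t,\phi)\) rather than over \(t\) alone. In exchange, the paper's route exposes the weight \(|\omega_N|\), the local thickness of the lune, which its discussion section refers to.

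\textbf{Two small points to fix.}

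First, \(w=(x',\rho)\) always satisfies \(\|w\|=1\). Its law therefore lives on the half-sphere \(\{w\in S^{d-2}:w_{d-1}\geq0\}\) and has no density on an open subset of \(\R^{d-1}\), contrary to what you state. The conclusion you need still holds. For \(d\geq3\), \(x'\) is uniformly distributed on the unit ball of \(\R^{d-2}\), so the support of the law of \(w\) is that whole closed half-sphere, which spans \(\R^{d-1}\). If \(\langle s,w\rangle=0\) almost surely, the continuous function \(\langle s,\cdot\rangle\) would vanish on this support, forcing \(s=0\).

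Second, on the antipodal arc \((\pi-\theta,\pi)\) you have \(\cos\phi<0\), which lies outside the range covered by your infimum. For the lower bound, either discard that arc (losing only a factor \(2\)), or use the invariance of the law of \(w\) under \(x'\mapsto-x'\), which gives \(g_t(\pi+\phi)=g_t(\phi)\).
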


The proof is a direct computation in spherical coordinates and is given in
Appendix~\ref{app:psi}; the case \(d=2\) is discussed there in
Remark~\ref{rem:d2}.

\subsection{Negative moments}

\begin{lemma}[uniform density bound]\label{lem:bounded-densities}
Let \(0<\theta\leq\pi/4\) and
\(W_\theta\eq\theta^{-1/\alpha}Y(e_d,v_\theta)\). Then \(W_\theta\) has a
continuous density \(p_\theta\) on \(\R^N\), and
\[
    \sup_{0<\theta\leq\pi/4}\ \sup_{x\in\R^N}p_\theta(x)<\infty.
\]
\end{lemma}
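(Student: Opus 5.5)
The plan is to use Fourier inversion, with Lemma~\ref{lem:characteristic} and Lemma~\ref{lem:tangent} as the only inputs. For \(t\in\R^N\cong e_d^\perp\), the characteristic function of \(W_\theta\) is \(\phi_\theta(t)=\E e^{i\langle\theta^{-1/\alpha}t,Y(e_d,v_\theta)\rangle}\). By Lemma~\ref{lem:characteristic} and homogeneity of \(\Psi_\theta\) of degree \(\alpha\) in \(t\), its modulus satisfies
\[
    |\phi_\theta(t)|=\exp\{-c_\alpha\Psi_\theta(\theta^{-1/\alpha}t)\}
    =\exp\{-c_\alpha\theta^{-1}\Psi_\theta(t)\}.
\]
The lower bound in~\eqref{eq:psi-comparison} then gives
\[
    |\phi_\theta(t)|\leq\exp\{-c_\alpha c\|t\|^\alpha\},
\]
with constants \(c_\alpha,c\) depending only on \(d\) and \(\alpha\). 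The key point is that the factor \(\theta\) produced by the lune width cancels exactly against the normalization \(\theta^{-1/\alpha}\).

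The majorant \(g(t)=\exp\{-c_\alpha c\|t\|^\alpha\}\) lies in \(L^1(\R^N)\) and does not depend on \(\theta\). Its integral is
\[
    \int_{\R^N}g=C_N\int_0^\infty e^{-c_\alpha c\rho^\alpha}\rho^{N-1}\dd\rho<\infty .
\]
Since \(\phi_\theta\in L^1\), the standard inversion theorem applies. It shows that \(W_\theta\) has a bounded continuous density
\[
    p_\theta(x)=(2\pi)^{-N}\int_{\R^N}e^{-i\langle t,x\rangle}\phi_\theta(t)\dd t .
\]
Continuity follows from dominated convergence with the majorant \(g\). The uniform bound follows from
\[
    \sup_x p_\theta(x)\leq(2\pi)^{-N}\int g<\infty,
\]
whose right-hand side is independent of \(\theta\in(0,\pi/4]\).

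A few routine checks remain. First, \(Y(e_d,v_\theta)\) takes values in \(e_d^\perp\), so it is an \(N\)-dimensional random vector and \(\phi_\theta\) is its genuine characteristic function on \(\R^N\). Second, Lemma~\ref{lem:characteristic} is needed only for the modulus, which is enough here. The asymmetry noted in Remark~\ref{rem:asymmetry} does no harm, because the inversion bound uses only \(|\phi_\theta|\).

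No step is a serious obstacle once Lemma~\ref{lem:tangent} is available; the real difficulty sits in that lemma. The one place needing care is the uniformity in \(\theta\). It comes solely from the \(\theta\)-independence of the constant \(c\) in~\eqref{eq:psi-comparison} and from the scaling identity above. I would state that identity explicitly: \(\Psi_\theta(\lambda t)=\lambda^\alpha\Psi_\theta(t)\) for \(\lambda>0\), which is immediate from the definition in~\eqref{eq:char-modulus}.
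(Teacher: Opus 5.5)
Your proposal is correct and follows the same route as the paper. Homogeneity of \(\Psi_\theta\) reduces \(|\E e^{i\langle t,W_\theta\rangle}|\) to \(\exp\{-c_\alpha\theta^{-1}\Psi_\theta(t)\}\), the lower bound in Lemma~\ref{lem:tangent} gives the \(\theta\)-independent integrable majorant \(e^{-c_\alpha c\|t\|^\alpha}\), and Fourier inversion then yields a continuous density with \(\|p_\theta\|_\infty\leq(2\pi)^{-N}\int_{\R^N}e^{-c_\alpha c\|t\|^\alpha}\dd t\).
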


\begin{proof}
The function \(\Psi_\theta\) is homogeneous of degree \(\alpha\), so
\(\Psi_\theta(\theta^{-1/\alpha}t)=\theta^{-1}\Psi_\theta(t)\). Together
with Lemmas~\ref{lem:characteristic} and~\ref{lem:tangent}, this yields
\[
    \bigl|\E e^{i\langle t,W_\theta\rangle}\bigr|
    =
    \exp\bigl\{-c_\alpha\theta^{-1}\Psi_\theta(t)\bigr\}
    \leq
    \exp\bigl\{-c_\alpha c\,\|t\|^\alpha\bigr\},
\]
with a constant independent of \(\theta\). The right-hand side is integrable
over \(\R^N\), so Fourier inversion gives a continuous density and the bound
\[
    \|p_\theta\|_\infty
    \leq
    (2\pi)^{-N}\int_{\R^N}e^{-c_\alpha c\|t\|^\alpha}\dd t<\infty,
\]
whose right-hand side is independent of \(\theta\).
\end{proof}

\begin{proposition}[negative moments]\label{prop:negative-moments}
Let \(U=B_S(u_0,r)\subset\Sph\) be a geodesic ball, that is, the ball of
radius \(r\) centered at \(u_0\) in the metric \(d_S\), and suppose that
\(\diam U\leq\pi/4\). For every \(0<\beta<N\), there exists
\(C_\beta<\infty\), depending only on \(d\), \(\alpha\), and \(\beta\),
such that, for all distinct \(u,v\in U\),
\begin{equation}\label{eq:negative-moment}
    \E\|X(u)-X(v)\|^{-\beta}
    \leq
    C_\beta\, d_S(u,v)^{-\beta/\alpha}.
\end{equation}
\end{proposition}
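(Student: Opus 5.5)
The plan is to reduce \eqref{eq:negative-moment} to a negative moment of the normalized tangential projection $W_\theta$ and then use the uniform density bound of Lemma~\ref{lem:bounded-densities}.

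\textbf{Step 1: reduction to $W_\theta$.} Fix distinct $u,v\in U$ and set $\theta=d_S(u,v)$. Since $\diam U\leq\pi/4$, we have $0<\theta\leq\pi/4$, which is exactly the range covered by Lemmas~\ref{lem:tangent} and~\ref{lem:bounded-densities}. By \eqref{eq:projection-negative},
\[
    \E\|X(u)-X(v)\|^{-\beta}\leq\E\|Y(u,v)\|^{-\beta}.
\]
By the rotational invariance discussed before Lemma~\ref{lem:tangent}, $\|Y(u,v)\|$ has the same law as $\|Y(e_d,v_\theta)\|=\theta^{1/\alpha}\|W_\theta\|$. Therefore
\[
    \E\|X(u)-X(v)\|^{-\beta}\leq\theta^{-\beta/\alpha}\,\E\|W_\theta\|^{-\beta},
\]
and it remains to show that $\sup_{0<\theta\leq\pi/4}\E\|W_\theta\|^{-\beta}<\infty$.

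\textbf{Step 2: the uniform bound.} Write $M=\sup_\theta\|p_\theta\|_\infty$, which is finite by Lemma~\ref{lem:bounded-densities}. Split the expectation over the unit ball in $\R^N$ and its complement. On $\{\|W_\theta\|>1\}$ the integrand $\|W_\theta\|^{-\beta}$ is at most $1$. On the unit ball,
\[
    \E\bigl[\|W_\theta\|^{-\beta}\1_{\{\|W_\theta\|\leq1\}}\bigr]
    \leq M\int_{\|x\|\leq1}\|x\|^{-\beta}\dd x
    = M\,|S^{N-1}|\int_0^1\rho^{N-1-\beta}\dd\rho<\infty.
\]
The last integral is finite precisely because $\beta<N$, and this is the only place the hypothesis $\beta<N$ enters. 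Combining the two pieces gives $C_\beta=1+M|S^{N-1}|/(N-\beta)$, which depends only on $d$, $\alpha$, and $\beta$.

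\textbf{The main point to check.} The genuinely hard input, namely uniformity in $\theta$, is already contained in Lemma~\ref{lem:tangent}, so no step here should be a real obstacle. Two details need to be verified. First, the rotation argument is applied to the projected increment: if $R$ maps $u\mapsto e_d$ and $v\mapsto v_\theta$, then $P_{e_d}R=RP_u$, so the norms agree in law. Second, the case $d=2$ ($N=1$, so $0<\beta<1$) is covered by the same computation, provided Lemma~\ref{lem:tangent} holds there, as indicated in Remark~\ref{rem:d2}.
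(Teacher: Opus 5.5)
Your proposal is correct and follows essentially the same argument as the paper: the projection inequality \eqref{eq:projection-negative}, reduction to the canonical pair $(e_d,v_\theta)$ by rotational invariance, and a split of $\E\|W_\theta\|^{-\beta}$ over the unit ball and its complement using the uniform density bound of Lemma~\ref{lem:bounded-densities}. The only differences are the order in which you apply these steps and your explicit check that $P_{e_d}R=RP_u$; neither affects the argument.
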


\begin{proof}
First take \((u,v)=(e_d,v_\theta)\). By
Lemma~\ref{lem:bounded-densities},
\[
    \E\|W_\theta\|^{-\beta}
    =
    \int_{\R^N}\|x\|^{-\beta}p_\theta(x)\dd x
    \leq
    \|p_\theta\|_\infty\int_{\|x\|\leq1}\|x\|^{-\beta}\dd x+1
    \leq C_\beta,
\]
because the integral over the unit ball is finite exactly when \(\beta<N\),
while \(\|x\|^{-\beta}\leq1\) on its complement. Since
\(Y(e_d,v_\theta)=\theta^{1/\alpha}W_\theta\), it follows that
\(\E\|Y(e_d,v_\theta)\|^{-\beta}\leq C_\beta\theta^{-\beta/\alpha}\).
Rotational invariance extends this estimate to arbitrary \(u,v\in U\) with
\(\theta=d_S(u,v)\leq\pi/4\), and~\eqref{eq:projection-negative} then
gives~\eqref{eq:negative-moment}.
\end{proof}

\subsection{Energy argument}

\begin{theorem}[lower bound]\label{thm:lower}
Almost surely,
\begingroup
\setlength{\abovedisplayskip}{3pt}
\setlength{\belowdisplayskip}{2pt}
\setlength{\abovedisplayshortskip}{3pt}
\setlength{\belowdisplayshortskip}{2pt}
\[
    \dimH F_\alpha\geq N\alpha=(d-1)\alpha.
\]
\endgroup
\end{theorem}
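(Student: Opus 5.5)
We will apply Frostman's energy criterion (Proposition~\ref{prop:frostman}) to the push-forward of surface measure under the field \(X\). Fix a geodesic ball \(U=B_S(u_0,r)\subset\Sph\) with \(\diam U\leq\pi/4\), and let \(\lambda\) be the restriction of \(\sigma\) to \(U\). Since \(G\) has full \(\sigma\)-measure for every realization of the directions, the random measure \(\mu\eq(X|_{U\cap G})_\#\lambda\) is a finite nonzero Borel measure with \(\mu(\R^d)=\lambda(U)>0\). It is concentrated on \(X(G)\), and \(X(G)\subset F_\alpha\) by Lemma~\ref{lem:parametrization}. Joint measurability of \((\omega,u)\mapsto X(\omega,u)\) makes \(\mu\) a well-defined random measure and allows Fubini's theorem to be used below.

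Fix \(s<N\alpha\) and put \(\beta=s\), so that \(\beta<N\alpha<N\). By Tonelli's theorem and Proposition~\ref{prop:negative-moments},
\[
    \E I_s(\mu)
    =\int_U\int_U\E\|X(u)-X(v)\|^{-s}\,\lambda(\dd u)\,\lambda(\dd v)
    \leq C_s\int_U\int_U d_S(u,v)^{-s/\alpha}\,\lambda(\dd u)\,\lambda(\dd v).
\]
The diagonal \(u=v\) has \(\lambda\otimes\lambda\)-measure zero, so only distinct pairs contribute. The right-hand side is finite because \(s/\alpha<N\) and \(\Sph\) is \(N\)-dimensional: \(\lambda(\{v:d_S(u,v)\leq\rho\})\leq C\rho^N\) uniformly in \(u\). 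Hence \(I_s(\mu)<\infty\) almost surely.

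A subtle point is whether \(\mu\) is supported on \(F_\alpha\) in the sense required by the Frostman criterion. \(X(G)\) need not be closed, and the closed support of \(\mu\) may only lie in \(\overline{X(G)}\). I would use the version of Proposition~\ref{prop:frostman} for measures with \(\mu(\R^d\setminus A)=0\) on an arbitrary set \(A\): if \(I_s(\mu)<\infty\), then \(\mu(E)=0\) for every \(E\) with \(\mathcal H^s(E)=0\). Applied with \(A=X(G)\), this gives \(\dimH X(G)\geq s\). If only the closed-support version is available, one instead approximates from inside. By Lusin's theorem, \(X|_U\) is continuous on compact sets \(K_j\subset U\cap G\) with \(\lambda(K_j)>0\). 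The restriction of \(\mu\) to the compact set \(X(K_j)\subset X(G)\) still has finite \(s\)-energy, so \(\dimH X(G)\geq\dimH X(K_j)\geq s\).

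Almost surely this holds simultaneously for a sequence \(s_m\uparrow N\alpha\), since it is a countable intersection of full-probability events. Monotonicity of Hausdorff dimension and \(X(G)\subset F_\alpha\) then give \(\dimH F_\alpha\geq N\alpha\) almost surely.

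The main analytic input, the uniform negative-moment bound, is already provided by Proposition~\ref{prop:negative-moments}. The step needing the most care is therefore the measurability and support bookkeeping: making sure that \(\mu\) charges only \(X(G)\), and that the energy criterion applies to a measure that is not necessarily carried by a closed subset of \(F_\alpha\).
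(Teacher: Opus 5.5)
Your proposal is correct and follows the paper's proof essentially step for step: you push forward $\lambda$ on a geodesic ball of diameter at most $\pi/4$, bound $\E I_s(\mu)$ via Tonelli and Proposition~\ref{prop:negative-moments}, use $s/\alpha<N$ for finiteness, apply Frostman, and intersect over countably many $s$. The only difference is the support bookkeeping: the paper appeals to analyticity and universal measurability of $X(G\cap U)$, whereas your outer-measure (Borel-regularity) argument or your Lusin/compact-subset argument reaches the same conclusion equally validly (and in fact $X$ is continuous at every point of $G$ by uniform convergence of the series, so Lusin is not even needed).
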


\begin{proof}
Fix a geodesic ball \(U\subset\Sph\) of diameter at most \(\pi/4\), and let
\(\lambda\) be the restriction of \(\sigma\) to \(U\); this is a finite
nonzero Borel measure. The set \(G\cap U\) is Borel and has full
\(\lambda\)-measure, and the map \(X\) is Borel. Therefore \(X(G\cap U)\) is
analytic and, in particular, universally measurable. These terms and this
implication are explained in Remark~\ref{rem:analytic-universal} of
Appendix~\ref{app:standard}. Consider the random finite measure
\(\mu=X_{\#}\lambda\), the pushforward of \(\lambda\) under \(X\). Since
\(\lambda(U\setminus G)=0\), the measure \(\mu\) is supported on
\(X(G\cap U)\subset F_\alpha\), and
\(\mu(\R^d)=\lambda(U)>0\).

Fix \(0<s<N\alpha\). Then \(s<N\), and a change of variables, Tonelli's
theorem, and Proposition~\ref{prop:negative-moments} give
\begin{align*}
    \E I_s(\mu)
    &=
    \int_{U\times U}
    \E\|X(u)-X(v)\|^{-s}\,\lambda(\dd u)\,\lambda(\dd v)\\
    &\leq
    C_s\int_{U\times U}
    d_S(u,v)^{-s/\alpha}\,\lambda(\dd u)\,\lambda(\dd v).
\end{align*}
The last integral is finite: the ball \(U\) is bi-Lipschitz equivalent to a
ball in \(\R^N\), and
\(\int_0^1 r^{N-1-s/\alpha}\dd r\) converges exactly when
\(s/\alpha<N\). Hence \(I_s(\mu)<\infty\) almost surely.

Proposition~\ref{prop:frostman} now implies almost surely that
\(\dimH X(G\cap U)\geq s\), and therefore \(\dimH F_\alpha\geq s\). Taking
the intersection of the probability-one events over all rational
\(s<N\alpha\) proves the claim.
\end{proof}

\section{Upper bound}\label{sec:upper}

\subsection{Local charts and spherical belts}\label{sec:charts}

For \(x\in\Sph\), let
\(H(x)=\{u\in\Sph:\langle u,x\rangle=0\}\) be the great sphere across which
the corresponding indicator in~\eqref{eq:field} changes value.

We use coordinate charts obtained by centrally projecting a hemisphere onto
its tangent hyperplane. After an orthogonal change of coordinates, on the
hemisphere \(u_d>0\) this projection is
\[
    u=(u',u_d)\longmapsto t=\frac{u'}{u_d}\in\R^N,
\]
and the inverse chart is
\[
    \phi(t)=\frac{(t,1)}{\sqrt{1+\|t\|^2}},
    \qquad t\in\mathcal D\subset\R^N,
\]
where \(\mathcal D\) is a compact cube. The condition
\(\langle\phi(t),x\rangle>0\) is equivalent to
\(\langle(t,1),x\rangle>0\), that is, to positivity of an affine function of
\(t\). The map \(\phi\) is bi-Lipschitz on bounded cubes, and the sphere can
be covered by finitely many charts of this form.

Partitioning the chart domains into subcubes of side length \(h=2^{-n}\) and
taking their images gives a finite covering \(\mathcal Q_n\) of the sphere by
connected compact sets \(Q\) such that
\begin{equation}\label{eq:chart-cover}
    \#\mathcal Q_n\leq C h^{-N},
    \qquad
    \diam Q\leq C h.
\end{equation}

For \(Q\in\mathcal Q_n\), define the set of directions whose great spheres
meet \(Q\), and let \(p_Q\) be its measure:
\[
    A(Q)=\{x\in\Sph:H(x)\cap Q\neq\varnothing\},
    \qquad
    p_Q=\sigma(A(Q)).
\]
Since \(Q\) is compact,
\(A(Q)=\{x:\min_{v\in Q}\langle v,x\rangle\leq0\leq
\max_{v\in Q}\langle v,x\rangle\}\) is closed and hence measurable.

\begin{lemma}[belt estimate]\label{lem:belt}
Uniformly in \(n\) and \(Q\in\mathcal Q_n\),
\begin{equation}\label{eq:belt}
    p_Q\leq C h.
\end{equation}
\end{lemma}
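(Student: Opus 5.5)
The plan is to bound \(A(Q)\) by a spherical belt around a single great sphere. Fix any point \(v_0\in Q\). Since \(\diam Q\leq Ch\) by~\eqref{eq:chart-cover}, every \(v\in Q\) satisfies \(\|v-v_0\|\leq d_S(v,v_0)\leq Ch\). If \(x\in A(Q)\), there is \(v\in Q\) with \(\langle v,x\rangle=0\). Then
\[
    |\langle v_0,x\rangle|
    =|\langle v_0-v,x\rangle|
    \leq\|v_0-v\|
    \leq Ch .
\]
Hence
\[
    A(Q)\subset B_{Ch}(v_0)\eq\{x\in\Sph:|\langle v_0,x\rangle|\leq Ch\},
\]
which is a belt of half-width \(Ch\) around the equator \(v_0^\perp\). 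This inclusion is the only point where the geometry of the cell enters, and it uses nothing beyond the diameter bound. In particular, the constant is uniform in \(n\) and \(Q\).

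It remains to compute the \(\sigma\)-measure of the belt \(\{|\langle v_0,x\rangle|\leq\eta\}\). By rotational invariance, take \(v_0=e_d\). The law of the coordinate \(x_d\) under \(\sigma\) has density \(c_d(1-s^2)^{(d-3)/2}\) on \([-1,1]\). For \(d\geq3\), this density is bounded by \(c_d\), so the belt has measure at most \(2c_d\eta\). For \(d=2\), the density \(c_2(1-s^2)^{-1/2}\) is unbounded near \(\pm1\) but bounded near \(0\). Alternatively, one can argue directly: the belt consists of the arcs of angle \(\arcsin\eta\) on either side of \(\pm e_1\), and \(\arcsin\eta\leq\pi\eta/2\) for \(\eta\leq1\). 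For \(\eta>1\), the bound \(\sigma\leq1\leq\eta\) is trivial. In either case, \(\sigma(\text{belt})\leq C\eta\) with \(C=C(d)\). Setting \(\eta=Ch\) gives~\eqref{eq:belt}.

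There is no real obstacle. The only mild care needed is the \(d=2\) case, handled either by restricting to small \(\eta\) or by the arc computation above. One should also note that chordal and geodesic distances are comparable, so the diameter bound in~\eqref{eq:chart-cover} may be read in either metric without affecting the constant beyond a factor.
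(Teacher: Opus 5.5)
Your proposal is correct and matches the paper's proof essentially verbatim. The paper also picks a point \(u_0\in Q\) and uses \(|\langle u_0,x\rangle|=|\langle u_0-v,x\rangle|\leq\|u_0-v\|\leq Ch\) to place \(A(Q)\) in a belt around \(u_0^\perp\). It then bounds the belt's measure exactly as you do: a bounded coordinate density for \(d\geq3\) and an arc computation for \(d=2\), which is the content of Proposition~\ref{prop:belt-density}.
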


\begin{proof}
Choose \(u_0\in Q\). If \(x\in A(Q)\), then
\(\langle v,x\rangle=0\) for some \(v\in Q\), and therefore
\[
    |\langle u_0,x\rangle|
    =
    |\langle u_0-v,x\rangle|
    \leq\|u_0-v\|
    \leq Ch .
\]
Thus \(A(Q)\) is contained in a spherical belt of width \(Ch\) around the
equator \(u_0^\perp\), whose measure is at most \(C'h\)
(Proposition~\ref{prop:belt-density}).
\end{proof}

\subsection{Hyperplane arrangements}

\begin{lemma}[number of sign strata]\label{lem:arrangement}
Let \(\mathcal D\subset\R^N\) be convex and let
\(\ell_1,\ldots,\ell_m\) be affine functions on \(\R^N\). The partition of
\(\mathcal D\) into nonempty level sets of the vector-valued map
\[
    t\longmapsto\bigl(\1_{\{\ell_1(t)>0\}},\ldots,
    \1_{\{\ell_m(t)>0\}}\bigr)
\]
has at most \(C_N(m+1)^N\) members. We call these sets \emph{sign strata};
the count includes strata contained in the hyperplanes
\(\{\ell_j=0\}\) themselves.
\end{lemma}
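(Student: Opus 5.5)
We bound the number of sign strata by the number of faces of the hyperplane arrangement they refine into, and then count those faces by a standard inductive count.

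\textbf{Reduction to the full sign pattern.} We first pass from the $\{0,1\}$-valued indicator vector to the finer sign vector
\[
    s(t)=\bigl(\operatorname{sign}\ell_1(t),\ldots,\operatorname{sign}\ell_m(t)\bigr)\in\{-1,0,1\}^m .
\]
The indicator vector is a function of $s(t)$. Hence each sign stratum is a union of nonempty level sets of $s$ in $\mathcal D$, and the number of sign strata is at most the number of nonempty level sets of $s$. Intersecting with $\mathcal D$ can only remove or split nothing: every nonempty level set of $s$ on $\mathcal D$ is contained in a nonempty level set of $s$ on $\R^N$, so the count on $\mathcal D$ is at most the count on $\R^N$. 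Constant functions $\ell_j$ contribute nothing and may be discarded. So it suffices to show that the number of faces of the arrangement of $m$ affine hyperplanes in $\R^N$, counted over all dimensions, is at most $C_N(m+1)^N$. Convexity of $\mathcal D$ is not even needed beyond this.

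\textbf{Counting faces of each dimension.} Each nonempty level set of $s$ is a relatively open convex set $F$. Its affine hull is an intersection $L$ of some of the hyperplanes $\{\ell_j=0\}$, or all of $\R^N$. Within $L$, the set $F$ is a full-dimensional cell, i.e.\ a connected component of the complement in $L$ of the remaining hyperplanes restricted to $L$.

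We show by induction on dimension $k$ that $m$ hyperplanes in $\R^k$ have at most $\sum_{i=0}^k\binom{m}{i}\leq(m+1)^k$ full-dimensional cells. Adding the $m$-th hyperplane $H$ increases the number of regions by at most the number of regions that the other $m-1$ hyperplanes cut $H\cong\R^{k-1}$ into, because each region of $H$ splits at most one old region into two. This gives
\[
    r_k(m)\leq r_k(m-1)+r_{k-1}(m-1),
\]
whose solution is the binomial sum above.

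\textbf{Summing over flats.} There are at most $\binom{m}{N-k}\leq(m+1)^{N-k}$ distinct flats $L$ of dimension $k$. This holds because each such flat is determined by a choice of $N-k$ of the hyperplanes with independent normals. On each flat, the induced arrangement has at most $m$ hyperplanes, so the flat carries at most $(m+1)^k$ cells. Summing over $k=0,\ldots,N$ gives at most $(N+1)(m+1)^N$ faces, which is the claimed bound with $C_N=N+1$.

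The main care point is the flat-counting step. Several index sets may define the same flat, and some hyperplanes may contain $L$ or be parallel to it. To handle this we count each flat once via a minimal defining subfamily of size $N-k$. On $L$ we drop the hyperplanes that contain $L$, since their sign is constant (zero) there, and also those disjoint from $L$, since their sign is constant there as well.
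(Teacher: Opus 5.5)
Your proof is correct and follows essentially the same route as the paper: both bound the sign strata by the faces of the full $\{-1,0,1\}$ sign arrangement, then count faces flat by flat, with at most $\binom{m}{N-k}$ flats of dimension $k$ and at most $\sum_{i\leq k}\binom{m}{i}$ cells in each. The difference is that you also prove the cell bound, via the deletion recursion $r_k(m)\leq r_k(m-1)+r_{k-1}(m-1)$, and you obtain the explicit constant $C_N=N+1$; the paper only quotes the cell bound and leaves $C_N$ implicit.
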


The proof is given in Appendix~\ref{app:arrangement}.

\subsection{Large and small jumps}\label{sec:large-small}

Fix the scale \(h=2^{-n}\) and set
\begin{equation}\label{eq:delta-scale}
    \delta\eq h^{1/\alpha}.
\end{equation}
For \(Q\in\mathcal Q_n\), let \(M_Q\) be the number of \emph{large} jumps
whose great spheres meet \(Q\), and let \(V_Q\) be the total variation of the
corresponding \emph{small} jumps:
\[
    M_Q
    \eq
    \Pi\bigl((\delta,\infty)\times A(Q)\bigr),
    \qquad
    V_Q
    \eq
    \int_{(0,\delta]\times A(Q)}
    r\,\Pi(\dd r,\dd x).
\]

\begin{lemma}[small-jump scale]\label{lem:small-jumps}
For every \(q>0\), there exists \(C_q<\infty\), depending only on \(d\),
\(\alpha\), and \(q\), such that
\begin{equation}\label{eq:small-moment}
    \E\bigl[(1+M_Q)^N V_Q^{\,q}\bigr]
    \leq C_q\,\delta^{q}
\end{equation}
uniformly over all positive integers \(n\) and all \(Q\in\mathcal Q_n\).
\end{lemma}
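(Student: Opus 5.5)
Since $M_Q$ and $V_Q$ are functionals of the restrictions of $\Pi$ to the disjoint sets $(\delta,\infty)\times A(Q)$ and $(0,\delta]\times A(Q)$, the restriction theorem for Poisson random measures makes them independent. We therefore plan to factor
\[
    \E\bigl[(1+M_Q)^N V_Q^{\,q}\bigr]=\E(1+M_Q)^N\cdot\E V_Q^{\,q}
\]
and bound the two factors separately. The first factor should be $O(1)$ and the second $O(\delta^q)$, uniformly in $n$ and $Q$.

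\textbf{Large jumps.} $M_Q$ is Poisson with mean
\[
    \nu\bigl((\delta,\infty)\times A(Q)\bigr)=\delta^{-\alpha}p_Q=h^{-1}p_Q\leq C
\]
by Lemma~\ref{lem:belt} and \eqref{eq:delta-scale}. All moments of a Poisson variable with bounded mean are bounded, for instance via $\E(1+M)^N\leq \E e^{NM}=\exp\{\mu(e^N-1)\}$. This gives $\E(1+M_Q)^N\leq C_N$.

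\textbf{Small jumps.} By Campbell's formula,
\[
    \E V_Q=p_Q\int_0^\delta r\,\alpha r^{-1-\alpha}\dd r=\frac{\alpha}{1-\alpha}\,p_Q\,\delta^{1-\alpha}\leq C h\,\delta^{1-\alpha}=C\delta,
\]
since $h=\delta^\alpha$. For $q\leq1$, Jensen's inequality gives $\E V_Q^{\,q}\leq(\E V_Q)^q\leq C\delta^q$.

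For $q>1$ we need higher moments. We would use the cumulants $\kappa_j=\int r^j\,\nu(\dd r,\dd x)$ over $(0,\delta]\times A(Q)$, which satisfy
\[
    \kappa_j=p_Q\,\frac{\alpha}{j-\alpha}\,\delta^{j-\alpha}\leq C_j\delta^j .
\]
Each moment $\E V_Q^{\,m}$ is a polynomial in $\kappa_1,\dots,\kappa_m$, and every term is $O(\delta^m)$ by homogeneity. Hence $\E V_Q^{\,m}\leq C_m\delta^m$ for integer $m$, and Lyapunov's inequality extends this to all $q$ between consecutive integers.

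An alternative for the second factor is scaling. The variable $V_Q/\delta$ is a compound Poisson-type variable: it is a Poisson integral of $r/\delta$ with intensity $p_Q\delta^{-\alpha}\alpha s^{-1-\alpha}\dd s$ on $(0,1]$. Its Laplace exponent is therefore dominated uniformly, so the exponential moment $\E e^{V_Q/\delta}\leq \exp\{C\int_0^1(e^s-1)\alpha s^{-1-\alpha}\dd s\}$ is bounded, which bounds all moments at once.

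\textbf{Main obstacle.} The only delicate point is ensuring that all bounds are uniform in $Q$. This rests entirely on $p_Q\leq Ch$ from Lemma~\ref{lem:belt} together with the choice $\delta^{-\alpha}=h^{-1}$ in \eqref{eq:delta-scale}. The independence step needs $(0,\delta]$ and $(\delta,\infty)$ to be disjoint, which holds by construction.
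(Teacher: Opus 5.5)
Your proposal is correct and follows the paper's proof essentially step for step. Independence of \(M_Q\) and \(V_Q\) comes from disjointness of the Poisson regions, \(M_Q\) is Poisson with mean \(p_Q\delta^{-\alpha}\leq C\), and the moments of \(V_Q/\delta\) are uniformly bounded. Your ``alternative'' scaling/exponential-moment argument is exactly the paper's, and your cumulant route (\(\kappa_j\leq C_j\delta^j\) via the moment--cumulant formula) is an equally valid variant of the same computation.
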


\begin{proof}
By~\eqref{eq:intensity}, the random variable \(M_Q\) has a Poisson
distribution with parameter
\[
    p_Q\int_\delta^\infty\alpha r^{-1-\alpha}\dd r
    =
    p_Q\,\delta^{-\alpha}
    \leq
    C h\cdot h^{-1}
    =
    C.
\]
Here we used~\eqref{eq:belt} and~\eqref{eq:delta-scale}; the threshold
\(\delta=h^{1/\alpha}\) was chosen precisely so that \(C\) is independent of
\(n\). Consequently, all moments of \((1+M_Q)^N\) are bounded uniformly in
\(n\) and \(Q\).

The random variables \(M_Q\) and \(V_Q\) are independent because they
correspond to the disjoint regions \((\delta,\infty)\times A(Q)\) and
\((0,\delta]\times A(Q)\). For \(t>0\), the exponential formula for a Poisson
integral and the substitution \(r=\delta y\) give
\begin{align*}
    \E\exp\left\{t\frac{V_Q}{\delta}\right\}
    &=
    \exp\left\{
        p_Q\int_0^\delta
        \left(e^{tr/\delta}-1\right)
        \alpha r^{-1-\alpha}\dd r
    \right\}\\
    &=
    \exp\left\{
        \alpha\,p_Q\delta^{-\alpha}
        \int_0^1(e^{ty}-1)y^{-1-\alpha}\dd y
    \right\}.
\end{align*}
The integral is finite because \(e^{ty}-1\asymp ty\) as \(y\to0\) and
\(\alpha<1\), while the factor \(p_Q\delta^{-\alpha}\) is uniformly bounded.
Thus all positive moments of \(V_Q/\delta\) are uniformly bounded, and
independence yields~\eqref{eq:small-moment}.
\end{proof}

\subsection{Adaptive covering of the image}

\begin{lemma}[adaptive covering of a cell]\label{lem:adaptive-cover}
For every \(Q\in\mathcal Q_n\), the set \(X(Q)\) can be covered by at most
\(C_N(1+M_Q)^N\) sets of diameter at most \(V_Q\).
\end{lemma}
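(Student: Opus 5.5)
The plan is to split the field on \(Q\) into three pieces according to the Poisson atoms \((\gamma_k,\eps_k)\). For \(u\in Q\) write
\[
    X(u)=X_{\mathrm{out}}(u)+X_{\mathrm{large}}(u)+X_{\mathrm{small}}(u).
\]
Here \(X_{\mathrm{out}}\) collects the atoms with \(\eps_k\notin A(Q)\), \(X_{\mathrm{large}}\) those with \(\eps_k\in A(Q)\) and \(\gamma_k>\delta\), and \(X_{\mathrm{small}}\) those with \(\eps_k\in A(Q)\) and \(\gamma_k\leq\delta\).

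\textbf{Outside atoms.} If \(\eps_k\notin A(Q)\), the great sphere \(H(\eps_k)\) misses \(Q\). Since \(Q\) is connected, the continuous function \(u\mapsto\langle u,\eps_k\rangle\) then has constant nonzero sign on \(Q\). Hence each indicator \(\1_{\{\langle u,\eps_k\rangle>0\}}\) is constant on \(Q\), and \(X_{\mathrm{out}}\) is a constant vector on \(Q\) (the series converges absolutely, so no issue arises). This step only uses the connectedness of the cells from~\eqref{eq:chart-cover}.

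\textbf{Large atoms.} There are exactly \(M_Q\) of them, almost surely finitely many. Since \(\eps_k\) is uniform, we may assume no \(\eps_k\) lies on the boundary hyperplane of a chart hemisphere; otherwise the corresponding indicator is constant on the relevant hemisphere piece, which is harmless. Pass to the chart containing \(Q\), so that \(Q=\phi(\bar Q)\) for a subcube \(\bar Q\subset\mathcal D\), which is convex. Through the central projection, each condition \(\langle\phi(t),\eps_k\rangle>0\) becomes \(\ell_k(t)>0\) with \(\ell_k(t)=\langle(t,1),\eps_k\rangle\) affine. Lemma~\ref{lem:arrangement} with \(m=M_Q\) then partitions \(\bar Q\) into at most \(C_N(M_Q+1)^N\) sign strata. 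On each stratum \(S\), every large indicator is constant, so \(X_{\mathrm{large}}\) is constant on \(\phi(S)\).

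\textbf{Small atoms.} On any set, in particular on each \(\phi(S)\), we bound the oscillation
\[
    \|X_{\mathrm{small}}(u)-X_{\mathrm{small}}(v)\|
    \leq\sum_{\gamma_k\leq\delta,\ \eps_k\in A(Q)}\gamma_k\,
    \bigl|\1_{\{\langle u,\eps_k\rangle>0\}}-\1_{\{\langle v,\eps_k\rangle>0\}}\bigr|
    \leq V_Q,
\]
using \(\|\eps_k\|=1\) and the fact that a difference of indicators has absolute value at most one. Consequently \(\diam X(\phi(S))\leq V_Q\) for each stratum, and the sets \(X(\phi(S))\) cover \(X(Q)\). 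Their number is at most \(C_N(1+M_Q)^N\), which is the claim.

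The only delicate point is the chart bookkeeping: the cell must lie in a single chart image, so that the strata are computed in a convex domain and the affine reformulation of the indicators is valid. This is guaranteed by the construction of \(\mathcal Q_n\) as images of subcubes of a single chart domain. The remaining steps are routine once the three-way split is in place.
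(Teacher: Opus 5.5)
Your proposal is correct and follows essentially the same argument as the paper: atoms whose great spheres miss the connected cell contribute a constant, the \(M_Q\) large atoms in \(A(Q)\) become affine functions \(\ell_x(t)=\langle(t,1),x\rangle\) on the chart subcube with at most \(C_N(1+M_Q)^N\) sign strata by Lemma~\ref{lem:arrangement}, and on each stratum the remaining oscillation is bounded by \(V_Q\). Your genericity assumption on the \(\eps_k\) relative to the chart boundary is unnecessary, and its justification is muddled, since \(\ell_x\) is affine for every \(x\in\Sph\); it does no harm to the argument.
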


This covering is illustrated in Fig.~\ref{fig:cell}.

\begin{proof}
The number of large jumps is finite almost surely. Indeed,
\[
    \Pi\bigl((\delta,\infty)\times\Sph\bigr)
\]
has a Poisson distribution with mean \(\delta^{-\alpha}\).

If an atom \((r,x)\), with \(r>\delta\), has \(x\notin A(Q)\), then the
function \(\langle\cdot,x\rangle\) does not vanish on the connected set \(Q\)
and therefore has constant sign; the contribution of this atom to \(X(u)\)
is constant on \(Q\). The remaining large atoms are the \(M_Q\) atoms with
\(x\in A(Q)\). Let \(\mathcal D_Q\) be the chart subcube such that
\(\phi(\mathcal D_Q)=Q\). In this chart, the condition
\(\langle\phi(t),x\rangle>0\) is equivalent to \(\ell_x(t)>0\), where
\(\ell_x(t)=\langle(t,1),x\rangle\) is affine. By
Lemma~\ref{lem:arrangement}, these \(M_Q\) functions divide
\(\mathcal D_Q\) into at most \(C_N(1+M_Q)^N\) sign strata. The contribution
of all large jumps is constant on each stratum, including strata contained in
the hyperplanes themselves.

Let \(u,v\in Q\) be the images of two points in the same stratum. Only the
indicators of small jumps can differ, and only for jumps whose great spheres
meet \(Q\). Therefore
\[
    \|X(u)-X(v)\|
    \leq
    \int_{(0,\delta]\times A(Q)}
    r\,\Pi(\dd r,\dd x)
    =
    V_Q .
\]
The images of the sign strata form the required covering.
\end{proof}

\begin{figure}[ht]
\centering
\begin{tikzpicture}[x=1cm,y=1cm,>=Stealth]
\fill[blue!4] (0,0) rectangle (4,4);
\fill[red!12] (0,0.93)--(1.78,1.53)--(2.28,2.62)--(0,3.28)--cycle;
\draw[thick] (0,0) rectangle (4,4);
\draw[blue!65!black,thick] (-0.4,0.8)--(4.4,2.4);
\draw[blue!65!black,thick] (0.9,-0.4)--(3.1,4.4);
\draw[blue!65!black,thick] (-0.4,3.4)--(4.4,2.0);
\node[blue!65!black,font=\small,right] at (4.42,2.4) {\(\ell_1=0\)};
\node[blue!65!black,font=\small,above] at (3.1,4.4) {\(\ell_2=0\)};
\node[blue!65!black,font=\small,left] at (-0.42,3.4) {\(\ell_3=0\)};
\node[font=\small] at (0.78,2.24) {stratum};
\draw[<->,thick] (0,-0.3)--(4,-0.3);
\node[font=\small,below] at (2,-0.3) {\(h\)};
\node[font=\small] at (3.5,0.5) {\(\mathcal D_Q\)};
\end{tikzpicture}
\caption{A cell in central-projection coordinates. The traces of the great
spheres associated with large jumps are affine hyperplanes \(\ell_j=0\),
which divide the subcube into at most \(C_N(1+M_Q)^N\) sign strata. On each
stratum (one is shaded), the large-jump contribution is constant, so the
diameter of its image is at most \(V_Q\).}\label{fig:cell}
\end{figure}

\begin{theorem}[upper bound]\label{thm:upper}
Almost surely,
\[
    \dimH\overline{X(\Sph)}\leq N\alpha .
\]
In particular, \(\dimH F_\alpha\leq(d-1)\alpha\).
\end{theorem}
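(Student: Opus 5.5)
The plan is to combine Lemma~\ref{lem:adaptive-cover} with the moment bound of Lemma~\ref{lem:small-jumps} and sum over the cells of \(\mathcal Q_n\). Fix \(s>N\alpha\) and a scale \(h=2^{-n}\), and set \(\delta=h^{1/\alpha}\). Since \(\mathcal Q_n\) covers the sphere, \(X(\Sph)=\bigcup_{Q\in\mathcal Q_n}X(Q)\). By Lemma~\ref{lem:adaptive-cover}, each \(X(Q)\) is covered by at most \(C_N(1+M_Q)^N\) sets of diameter at most \(V_Q\). We replace each of these sets by its closure, which does not change its diameter. The resulting family is finite, so its union is closed, and it therefore covers \(\overline{X(\Sph)}\), as noted in Remark~\ref{rem:closure}. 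Hence, if \(\eta_n\eq\max_Q V_Q\),
\[
    \mathcal H^s_{\eta_n}\bigl(\overline{X(\Sph)}\bigr)
    \leq
    S_n\eq\sum_{Q\in\mathcal Q_n} C_N(1+M_Q)^N V_Q^{\,s}.
\]
Lemma~\ref{lem:small-jumps} with \(q=s\), together with \eqref{eq:chart-cover}, gives
\[
    \E S_n\leq C h^{-N}\delta^{s}=C\,2^{-n(s/\alpha-N)}.
\]
This is summable in \(n\) because \(s>N\alpha\). By Markov's inequality and Borel--Cantelli, or simply because \(\sum_n S_n<\infty\) almost surely, we get \(S_n\to0\) almost surely.

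Next we must make sure that the covering diameters tend to zero, so that these sums really control \(\mathcal H^s\) and not merely \(\mathcal H^s_\infty\). We use the crude bound \(V_Q\leq\int_{(0,\delta]\times\Sph}r\,\Pi(\dd r,\dd x)\eq T_\delta\). This total small-jump mass is nonincreasing in \(\delta\). Its expectation is \(\alpha\delta^{1-\alpha}/(1-\alpha)\to0\), so \(T_\delta\to0\) almost surely; in fact \(T_\delta\to0\) holds for every realization, since it is the tail of the convergent series \(\sum_k\gamma_k\). Consequently \(\eta_n\to0\) almost surely. For any fixed \(\eta>0\) we then have, almost surely, \(\mathcal H^s_\eta(\overline{X(\Sph)})\leq S_n\) for all large \(n\), hence \(\mathcal H^s_\eta=0\). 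Letting \(\eta\downarrow0\) gives \(\mathcal H^s(\overline{X(\Sph)})=0\).

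We then intersect the probability-one events over the rational \(s>N\alpha\), which yields \(\dimH\overline{X(\Sph)}\leq N\alpha\) almost surely. The final claim follows from Lemma~\ref{lem:parametrization}: \(F_\alpha\subset\overline{X(\Sph)}\), and Hausdorff dimension is monotone.

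The main obstacle I expect is bookkeeping rather than depth. First, one must check that Lemma~\ref{lem:adaptive-cover} applies simultaneously to all cells of all levels on a single probability-one event. This holds because there are countably many cells and the number of large jumps is almost surely finite for every \(\delta\). Second, one must justify that the covering sets may be closed, which is handled by the finiteness of the covering as explained in Remark~\ref{rem:closure}. A minor point is that strata whose images are singletons (\(V_Q=0\)) contribute \(0\) to the sum, which is consistent with the definition of \(\mathcal H^s_\eta\) using nonempty sets of diameter \(0\).
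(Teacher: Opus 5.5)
Your proof is correct and follows the paper's argument essentially step by step. You use the same covering from Lemma~\ref{lem:adaptive-cover} over all cells of \(\mathcal Q_n\), the same bound \(\E S_n\leq C h^{s/\alpha-N}\) with summability in \(n\), and you handle the closure through finiteness of the covering. The only deviation is how you show \(\eta_n\to0\): the paper deduces it from \(\eta_n^{\,s}\leq T_n\to0\) (taking \(C_N\geq1\)), whereas you bound every \(V_Q\) by the tail \(\sum_{k:\gamma_k\leq\delta}\gamma_k\) of the convergent series. Your version is equally valid and does not depend on \(s\), so it would also apply directly at the critical exponent in Corollary~\ref{cor:finite-measure}.
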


\begin{proof}
\step{Step 1: constructing the covering.}
Fix \(s>N\alpha\). For every \(n\), apply
Lemma~\ref{lem:adaptive-cover} to all \(Q\in\mathcal Q_n\). Since
\(\bigcup_{Q\in\mathcal Q_n}Q=\Sph\), the union of the resulting families is
a finite covering of \(X(\Sph)\). Replacing its members by their closures gives
a covering of \(\overline{X(\Sph)}\): the closure of a finite union is the
union of the closures, and closure does not change diameter. Set
\[
    T_n
    \eq
    C_N\sum_{Q\in\mathcal Q_n}(1+M_Q)^N V_Q^{\,s},
    \qquad
    \eta_n\eq\max_{Q\in\mathcal Q_n}V_Q .
\]
Then the sum of the \(s\)th powers of the diameters in the \(n\)th covering is
at most \(T_n\), and each member has diameter at most \(\eta_n\).

\step{Step 2: estimating the expectation.}
By~\eqref{eq:chart-cover}, Lemma~\ref{lem:small-jumps}, and
\eqref{eq:delta-scale},
\begin{equation}\label{eq:ET}
    \E T_n
    \leq
    C h^{-N}\delta^{\,s}
    =
    C h^{\,s/\alpha-N}.
\end{equation}
For \(h=2^{-n}\) and \(s/\alpha>N\), the series \(\sum_n\E T_n\) is
geometrically convergent.

\step{Step 3: passing to an almost-sure statement.}
By Tonelli's theorem,
\(\E\sum_n T_n=\sum_n\E T_n<\infty\). Thus \(\sum_n T_n<\infty\) almost
surely and, in particular, \(T_n\to0\). Assuming without loss of generality
that \(C_N\geq1\), we have \(\eta_n^{\,s}\leq T_n\), and hence
\(\eta_n\to0\).

\step{Step 4: conclusion.}
We have
\(\mathcal H^s_{\eta_n}\bigl(\overline{X(\Sph)}\bigr)\leq T_n\), where
\(\eta_n\to0\) and \(T_n\to0\) almost surely. Therefore
\(\mathcal H^s\bigl(\overline{X(\Sph)}\bigr)=0\) almost surely. Taking the
intersection of the probability-one events over all rational
\(s>N\alpha\) proves the first bound; the second follows from
\(F_\alpha\subset\overline{X(\Sph)}\) in~\eqref{eq:sandwich}.
\end{proof}

\section{Completion of the proof}\label{sec:completion}

\begin{proof}[Proof of Theorem~\ref{thm:main}]
Theorem~\ref{thm:lower} gives
\(\dimH F_\alpha\geq(d-1)\alpha\) almost surely, while
Theorem~\ref{thm:upper} gives the reverse bound. Therefore
\(\dimH F_\alpha=(d-1)\alpha\) almost surely.
\end{proof}

\begin{corollary}[all intermediate sets]\label{cor:all-images}
Almost surely,
\[
    \dimH\expo Z_\alpha
    =
    \dimH\ext Z_\alpha
    =
    \dimH X(\Sph)
    =
    \dimH\overline{X(\Sph)}
    =
    (d-1)\alpha .
\]
\end{corollary}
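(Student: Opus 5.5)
The plan is to sandwich all four sets between bounds already proved, using only the inclusions of Lemma~\ref{lem:parametrization} and monotonicity of Hausdorff dimension. Work on the intersection of the probability-one events on which \eqref{eq:sandwich} holds, Theorem~\ref{thm:lower} holds, and Theorem~\ref{thm:upper} holds. The chain of inclusions
\[
    X(G)=\expo Z_\alpha\subset F_\alpha\subset\overline{X(\Sph)},
    \qquad
    X(G)\subset X(\Sph)\subset\overline{X(\Sph)},
\]
together with monotonicity, gives that each of \(\dimH\expo Z_\alpha\), \(\dimH\ext Z_\alpha\), and \(\dimH X(\Sph)\) is at most \(\dimH\overline{X(\Sph)}\). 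By Theorem~\ref{thm:upper}, this last quantity is at most \(N\alpha\).

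For the lower bound, I will not argue through \(F_\alpha\). The proof of Theorem~\ref{thm:lower} in fact established \(\dimH X(G\cap U)\geq s\) for every rational \(s<N\alpha\), almost surely. The Frostman measure \(\mu=X_{\#}\lambda\) was shown to be supported on \(X(G\cap U)\), and \(X(G\cap U)\) is contained in \(X(G)=\expo Z_\alpha\). Hence \(\dimH\expo Z_\alpha\geq N\alpha\) almost surely.

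Since \(\expo Z_\alpha\) is the smallest set in the chain, monotonicity then forces every set in the chain to have dimension at least \(N\alpha\). Combining this with the upper bound of the previous paragraph shows that all four dimensions equal \((d-1)\alpha\) almost surely.

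The only point needing care is that the lower bound must be extracted for \(\expo Z_\alpha\) itself rather than for \(F_\alpha\), since the lower bound for \(F_\alpha\) does not transfer down to the smaller set. This is why I will cite the intermediate conclusion about \(X(G\cap U)\) from the proof of Theorem~\ref{thm:lower} rather than its final statement. Measurability of these sets is not an issue, because Hausdorff dimension is defined for arbitrary sets via outer measure. Remark~\ref{rem:closure} already addresses the closure, and no further obstacle is expected.
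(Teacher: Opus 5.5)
Your proposal is correct and matches the paper's proof. The paper also sandwiches all four sets between \(X(G\cap U)\) and \(\overline{X(\Sph)}\) via Lemma~\ref{lem:parametrization}. As you do, it takes the lower bound for \(X(G\cap U)\subset\expo Z_\alpha\) from inside the proof of Theorem~\ref{thm:lower}, rather than from its statement about \(F_\alpha\), and the upper bound for \(\overline{X(\Sph)}\) from Theorem~\ref{thm:upper}, concluding by monotonicity of Hausdorff dimension.
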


\begin{proof}
By Lemma~\ref{lem:parametrization}, all four sets are sandwiched between
\(X(G\cap U)\) and \(\overline{X(\Sph)}\):
\[
    X(G\cap U)\subset X(G)=\expo Z_\alpha\subset\ext Z_\alpha
    \subset\overline{X(\Sph)},
    \qquad
    X(G)\subset X(\Sph)\subset\overline{X(\Sph)}.
\]
The proof of Theorem~\ref{thm:lower} already establishes the lower bound for
\(X(G\cap U)\), whereas Theorem~\ref{thm:upper} establishes the upper bound
for \(\overline{X(\Sph)}\). Monotonicity of dimension shows that all
intermediate values coincide.
\end{proof}

The calculation used in the proof of Theorem~\ref{thm:upper} yields more than
the value of the dimension: it also controls the critical Hausdorff measure.

\begin{corollary}[finiteness of the critical measure]\label{cor:finite-measure}
Almost surely,
\[
    \mathcal H^{(d-1)\alpha}\bigl(\overline{X(\Sph)}\bigr)<\infty ;
\]
in particular,
\(\mathcal H^{(d-1)\alpha}(F_\alpha)<\infty\) almost surely.
\end{corollary}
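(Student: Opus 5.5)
We would reuse the covering from the proof of Theorem~\ref{thm:upper} at the critical exponent \(s=N\alpha\). As there, for each \(n\) we apply Lemma~\ref{lem:adaptive-cover} to every \(Q\in\mathcal Q_n\) and take closures. This gives a finite covering of \(\overline{X(\Sph)}\) by sets of diameter at most \(\eta_n=\max_Q V_Q\), whose sum of \(N\alpha\)-th powers of diameters is at most
\[
    T_n=C_N\sum_{Q\in\mathcal Q_n}(1+M_Q)^N V_Q^{\,N\alpha}.
\]
Hence \(\mathcal H^{N\alpha}_{\eta_n}\bigl(\overline{X(\Sph)}\bigr)\leq T_n\) for every \(n\).

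\textbf{Step 1 (expectation bound).} Lemma~\ref{lem:small-jumps} with \(q=N\alpha\), together with \eqref{eq:chart-cover} and \eqref{eq:delta-scale}, gives
\[
    \E T_n\leq C h^{-N}\delta^{N\alpha}=C h^{-N}h^{N}=C,
\]
uniformly in \(n\). The sum \(\sum_n\E T_n\) no longer converges, so we do not get \(T_n\to0\). We only need \(\liminf_n T_n<\infty\), and Fatou's lemma provides this:
\[
    \E\liminf_n T_n\leq\liminf_n\E T_n\leq C,
\]
so \(\liminf_n T_n<\infty\) almost surely.

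\textbf{Step 2 (mesh goes to zero).} We must check that \(\eta_n\to0\) almost surely. This is the only real point, since the argument in Theorem~\ref{thm:upper} derived \(\eta_n\to0\) from \(T_n\to0\), which is no longer available. We use a pathwise argument instead. Every \(Q\in\mathcal Q_n\) satisfies
\[
    V_Q\leq\sum_{k:\,\gamma_k\leq\delta}\gamma_k,
\]
the total mass of all jumps of size at most \(\delta=2^{-n/\alpha}\). Since \(\sum_k\gamma_k<\infty\) almost surely and \(\delta\to0\), this tail sum tends to zero. Thus \(\eta_n\to0\) almost surely, deterministically in terms of the realization.

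\textbf{Step 3 (conclusion).} On the almost sure event where \(\eta_n\to0\) and \(\liminf_n T_n<\infty\), choose a subsequence \(n_j\) with \(T_{n_j}\to\liminf_n T_n\). For any \(\rho>0\), once \(j\) is large enough that \(\eta_{n_j}\leq\rho\), we have \(\mathcal H^{N\alpha}_\rho\leq T_{n_j}\). Letting \(j\to\infty\) and then \(\rho\downarrow0\) gives
\[
    \mathcal H^{N\alpha}\bigl(\overline{X(\Sph)}\bigr)\leq\liminf_n T_n<\infty.
\]
The statement for \(F_\alpha\) follows from monotonicity of \(\mathcal H^{N\alpha}\) and the inclusion \(F_\alpha\subset\overline{X(\Sph)}\) in \eqref{eq:sandwich}.
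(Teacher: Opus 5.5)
Your proof is correct and follows the paper's argument essentially step for step: the same adaptive covering at $s=N\alpha$, the uniform bound $\E T_n\leq C$, Fatou's lemma giving $\liminf_n T_n<\infty$ almost surely, and the same subsequence argument for $\mathcal H^{N\alpha}_\rho$. The only deviation is your Step~2. The paper gets $\eta_n\to0$ by rerunning Step~3 of Theorem~\ref{thm:upper} with a fixed $s'>N\alpha$; this works because $\eta_n=\max_Q V_Q$ does not depend on $s$, so that route was never lost. You instead bound each $V_Q$ pathwise by the tail $\sum_{k:\gamma_k\leq\delta}\gamma_k$ of the almost surely convergent series, which is equally valid and slightly more elementary.
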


\begin{proof}
Retain the notation from the proof of Theorem~\ref{thm:upper}, and consider
the quantities \(T_n\) corresponding to the exponent \(s=N\alpha\). Then
\(s/\alpha=N\), and~\eqref{eq:ET} gives
\(\E T_n\leq C h^0=C\) uniformly in \(n\). The series
\(\sum_n\E T_n\) need no longer converge, but Fatou's lemma gives
\[
    \E\bigl[\liminf_{n\to\infty}T_n\bigr]
    \leq
    \liminf_{n\to\infty}\E T_n
    \leq C<\infty,
\]
and hence \(\liminf_n T_n<\infty\) almost surely.

In addition, applying Step~3 of the proof of Theorem~\ref{thm:upper} with any
fixed \(s'>N\alpha\) gives \(\eta_n\to0\) almost surely.

Work on the intersection of these two probability-one events. Choose a
subsequence \(n_j\) such that \(T_{n_j}\to\liminf_n T_n\). Since
\(\eta_{n_j}\to0\), for every \(\eta>0\) there exists \(j\) with
\(\eta_{n_j}\leq\eta\); monotonicity of \(\mathcal H^s_\eta\) in \(\eta\)
then gives
\[
    \mathcal H^{N\alpha}_{\eta}\bigl(\overline{X(\Sph)}\bigr)
    \leq
    \mathcal H^{N\alpha}_{\eta_{n_j}}\bigl(\overline{X(\Sph)}\bigr)
    \leq
    T_{n_j}.
\]
Letting \(j\to\infty\), we obtain
\(\mathcal H^{N\alpha}_{\eta}(\overline{X(\Sph)})\leq\liminf_n T_n\) for
every \(\eta>0\). Finally, letting \(\eta\downarrow0\) yields
\(\mathcal H^{N\alpha}(\overline{X(\Sph)})\leq\liminf_n T_n<\infty\).
The remaining assertion follows from
\(F_\alpha\subset\overline{X(\Sph)}\).
\end{proof}

\clearpage
\section{Discussion and open problems}\label{sec:discussion}

Uniformity of the spectral measure is used differently in the two halves of
the proof. For the upper bound, it is enough to have the uniform belt estimate
\begin{equation}\label{eq:belt-general}
    \sigma\{x\in\Sph:|\langle u,x\rangle|\leq r\}\leq Cr,
    \qquad u\in\Sph,\ 0<r<1 .
\end{equation}
The central projections, the combinatorics of hyperplane arrangements, and
the scale decomposition of the jumps do not depend on \(\sigma\). Therefore
Theorem~\ref{thm:upper} and Corollary~\ref{cor:finite-measure} extend to every
spectral measure for which~\eqref{eq:belt-general} holds uniformly in \(u\),
in particular, to every measure with a bounded density relative to
\(\sigma\).

The lower bound requires rotational invariance and uniform non-degeneracy of
the tangential integral \(J\) from Appendix~\ref{app:psi}, which yields the
two-sided estimate~\eqref{eq:psi-comparison}. The first open problem is thus
to identify minimal anisotropic assumptions: the upper control of belts must
be supplemented by quantitative non-degeneracy of tangential components, and
the reduction to a canonical pair, unavailable in general, may have to be
replaced by a different normalization.

The second question concerns the critical measure. By
Corollary~\ref{cor:finite-measure}, almost surely,
\[
    \mathcal H^{(d-1)\alpha}(F_\alpha)<\infty,
\]
but the corollary does not establish positivity. For \(d=2\), the critical
measure is almost surely zero. Indeed, \(F_\alpha\) consists of two centrally
symmetric parts that are bi-Lipschitz equivalent to the range of an
\(\alpha\)-stable subordinator~\nbcite[Theorem~7]{davydov2019}. For this range,
Taylor and Wendel~\nbcite[Sections~5--6]{taylor1966} found the gauge function
\begin{equation}\label{eq:gauge}
    \varphi(h)=h^{\alpha}\bigl(\log\log(1/h)\bigr)^{1-\alpha},
\end{equation}
for which the \(\varphi\)-measure (the generalized Hausdorff measure with gauge
function \(\varphi\)) is almost surely positive and finite; in its definition,
terms of the form \(\varphi(\diam U_j)\) are summed instead of
\((\diam U_j)^s\). Since \(\varphi(h)/h^\alpha\to\infty\) as
\(h\downarrow0\), it follows that \(\mathcal H^\alpha(F_\alpha)=0\). This
suggests the conjecture
\(\mathcal H^{(d-1)\alpha}(F_\alpha)=0\) for all \(d\geq2\). For \(d\geq3\),
neither this conjecture nor the correct gauge function is known: the
one-dimensional argument uses subordinator theory and does not extend to the
\(N\)-dimensional parameter space.

Finally, the packing dimension of \(F_\alpha\) and the multifractal
properties of the field remain open. The adaptive covering contains sets of
different diameters and therefore does not directly yield an upper bound for
the Minkowski dimension; such a bound would require uniform, cell-by-cell
control of the variables \(V_Q\).

\clearpage
\appendix

\section{Technical computations}\label{app:technical}

\subsection{Geometric meaning of the tangential projection}

\begin{remark}[why the projection is needed]\label{rem:why-projection}
Return to the tangential projection introduced in
Subsection~\ref{sec:tangent-projection}:
\[
    Y(u,v)=P_u\Delta(u,v).
\]
Its integral form is given in~\eqref{eq:tangent-projection-poisson}. We now
explain why this projection is used in place of the full increment
\(\Delta(u,v)\). Only those \(\eps_k\) that belong to the symmetric difference
of the hemispheres determined by \(u\) and \(v\) contribute nontrivially to
\(\Delta(u,v)\). They form a spherical lune of width comparable to
\(\theta=d_S(u,v)\) around the equator \(u^\perp\)
(Fig.~\ref{fig:lune}). Within this lune, the tangential component of a
direction is of order one, while its normal component is of order \(\theta\).
Consequently, the characteristic exponent of the full increment is of order
\(\theta\|t\|^\alpha\) in tangential directions but only of order
\(\theta^{1+\alpha}\|t\|^\alpha\) in the normal direction. After the
normalization \(\theta^{-1/\alpha}\), which is appropriate for the tangential
part, the normal part contributes
\(\theta^\alpha\|t\|^\alpha\to0\): the normalized distribution degenerates
along \(u\), and no uniform density estimate is possible. The tangential
projection \(Y(u,v)\) discards this degenerating component and retains exactly
the part of the increment with the correct scale \(\theta^{1/\alpha}\) in an
\(N\)-dimensional space.
\end{remark}

\begin{figure}[ht]
\centering
\begin{tikzpicture}[x=1cm,y=1cm,>=Stealth]
\def\th{32}
\fill[blue!12] (0,0) -- (0:2.3) arc (0:\th:2.3) -- cycle;
\fill[blue!12] (0,0) -- (180:2.3) arc (180:180+\th:2.3) -- cycle;
\draw[thick] (0,0) circle (2.3);
\draw[blue!65!black,thick] (180:2.3)--(0:2.3);
\draw[blue!65!black,thick] (180+\th:2.3)--(\th:2.3);
\node[blue!65!black,font=\small,left] at (-2.36,0.26) {\(u^{\perp}\)};
\node[blue!65!black,font=\small,above right=1pt and 1pt] at (\th:2.3) {\(v^{\perp}\)};
\draw[->,red!70!black,thick] (0,0)--(90:2.3);
\node[red!70!black,font=\small,above] at (0,2.3) {\(u\)};
\draw[->,red!70!black,thick] (0,0)--(90+\th:2.3);
\node[red!70!black,font=\small,above left=-2pt and -2pt] at (90+\th:2.3) {\(v\)};
\draw[thin] (90:0.75) arc (90:90+\th:0.75);
\node[font=\small] at (-0.36,1.02) {\(\theta\)};
\coordinate (x) at (16:2.3);
\draw[->,thick] (0,0)--(x);
\node[font=\small,right] at (2.26,0.82) {\(x\)};
\draw[densely dashed] (x) -- (2.211,0);
\node[font=\small,right] at (2.36,0.30) {\(\langle u,x\rangle\asymp\theta\)};
\node[font=\small] at (1.05,-0.54) {\(\|P_ux\|\asymp1\)};
\node[font=\small,left] at (-2.42,-0.94) {\(\{a_{u,v}\neq0\}\)};
\draw[->,thin] (-2.34,-0.90)--(-1.62,-0.58);
\end{tikzpicture}
\caption{Section of the sphere by the plane containing \(u\) and \(v\). The
set \(\{a_{u,v}\neq0\}\) is a spherical lune of width comparable to
\(\theta\). Within it, the tangential component of a direction is of order
one, while the normal component is of order \(\theta\).}\label{fig:lune}
\end{figure}

\subsection{Characteristic exponent in spherical coordinates}
\label{app:psi}

Proof of Lemma~\ref{lem:tangent}. Write \(x\in\Sph\) as
\[
    x=(\cos\varphi\,\omega,\ \sin\varphi),
    \qquad
    \omega\in S^{N-1},\quad
    -\frac{\pi}{2}\leq\varphi\leq\frac{\pi}{2},
\]
so that \(\sin\varphi\) is the last coordinate.

\step{Step 1: the interval of sign change.}
We have \(\langle e_d,x\rangle=\sin\varphi\) and
\[
    \langle v_\theta,x\rangle
    =
    \cos\theta\sin\varphi+\sin\theta\cos\varphi\,\omega_N
    =
    \sqrt{\cos^2\theta+\sin^2\theta\,\omega_N^2}\;
    \sin(\varphi+\psi),
\]
where \(\tan\psi=\tan\theta\,\omega_N\) and \(|\psi|<\pi/2\). The first
expression changes sign at \(\varphi=0\), and the second at
\(\varphi=-\psi\). Thus, for fixed \(\omega\), the signs differ precisely on
the interval between \(-\psi\) and \(0\), whose length is
\[
    b_\theta(\omega)=\arctan\bigl(\tan\theta\,|\omega_N|\bigr).
\]

\step{Step 2: integral representation.}
In these coordinates, the uniform measure on the sphere is
\[
    \sigma(\dd x)=c_d(\cos\varphi)^{N-1}\dd\varphi\,\dd\omega.
\]
Moreover, for \(t\in e_d^\perp\),
\(|\langle t,x\rangle|^\alpha=(\cos\varphi)^\alpha
|\langle t,\omega\rangle|^\alpha\). Since the integrand is even in
\(\varphi\), we obtain
\begin{equation}\label{eq:psi-coordinates}
    \Psi_\theta(t)
    =
    c_d\int_{S^{N-1}}|\langle t,\omega\rangle|^\alpha
    \int_0^{b_\theta(\omega)}
    (\cos\varphi)^{\alpha+N-1}\dd\varphi\,\dd\omega .
\end{equation}

\step{Step 3: two-sided estimate.}
For \(0<\theta\leq\pi/4\), we have \(\tan\theta\asymp\theta\) and
\(\tan\theta\,|\omega_N|\leq1\), and hence
\(b_\theta(\omega)\asymp\theta|\omega_N|\). On
\([0,b_\theta(\omega)]\subset[0,\pi/4]\),
\((\cos\varphi)^{\alpha+N-1}\asymp1\). It follows from
\eqref{eq:psi-coordinates} that
\[
    \Psi_\theta(t)
    \asymp
    \theta\,J(t),
    \qquad
    J(t)\eq
    \int_{S^{N-1}}
    |\langle t,\omega\rangle|^\alpha|\omega_N|\dd\omega .
\]
By the dominated convergence theorem, \(J\) is continuous on \(S^{N-1}\),
and it is strictly positive because the set of \(\omega\) for which both
\(\langle t,\omega\rangle\neq0\) and \(\omega_N\neq0\) has positive measure.
By compactness of \(S^{N-1}\), there exist \(0<m\leq M<\infty\) such that
\(m\leq J(t)\leq M\) for \(\|t\|=1\). Homogeneity of \(\Psi_\theta\) of
degree \(\alpha\) gives~\eqref{eq:psi-comparison} for all
\(t\in e_d^\perp\).
\qed

\begin{remark}[the case \(d=2\)]\label{rem:d2}
We spell out the endpoint case of Lemma~\ref{lem:tangent}. When \(d=2\), we have
\(N=1\), so \(S^{N-1}=S^0=\{-1,+1\}\) is a two-point compact space with
counting measure, and the integral \(J(t)\) above equals \(2|t|^\alpha\). All
subsequent arguments remain unchanged: the objects that may initially appear
\enquote{degenerate} are perfectly well defined. The same applies to
Lemma~\ref{lem:arrangement}, which for \(N=1\) says that \(m\) points divide
an interval into at most \(2m+1\) parts.
\end{remark}

\subsection{Number of sign strata}\label{app:arrangement}

Proof of Lemma~\ref{lem:arrangement}. Let \(L_j=\{\ell_j=0\}\), and let
\(\mathcal F\) be the set of all nonempty \emph{faces} of the arrangement
\(L_1,\ldots,L_m\) in \(\R^N\), that is, the nonempty intersections
\[
    \bigcap_{j=1}^{m}S_j,
    \qquad
    S_j\in\bigl\{\{\ell_j<0\},\ \{\ell_j=0\},\ \{\ell_j>0\}\bigr\}.
\]

\step{Step 1: no more strata than faces.}
Each sign stratum is an intersection of sets of the form \(\{\ell_j>0\}\)
and \(\{\ell_j\leq0\}\), and is therefore convex; distinct strata are
disjoint. Every stratum is a union of faces from \(\mathcal F\), and every
nonempty stratum contains at least one face. Hence the number of nonempty
strata is at most \(\#\mathcal F\). Intersecting with the convex set
\(\mathcal D\) can only reduce their number.

\step{Step 2: estimating the number of faces.}
Every \(k\)-dimensional face lies in the intersection of \(N-k\) linearly
independent hyperplanes \(L_j\); there are at most
\(\binom{m}{N-k}\) ways to choose such an intersection. Within the resulting
\(k\)-dimensional affine subspace, the remaining hyperplanes cut out at most
\(\sum_{i=0}^{k}\binom{m}{i}\) full-dimensional regions. Summing over
\(0\leq k\leq N\), we obtain
\[
    \#\mathcal F
    \leq
    \sum_{k=0}^{N}\binom{m}{N-k}\sum_{i=0}^{k}\binom{m}{i}
    \leq
    C_N (m+1)^N ,
\]
because every term is at most
\(C\,m^{N-k}\cdot m^k=C\,m^N\), and the number of terms depends only on
\(N\). Degeneracies, such as coincident or parallel hyperplanes, can only
decrease the number of faces.
\qed

\section{Standard facts used in the proof}\label{app:standard}

\begin{proposition}[{Straszewicz~\nbcite{straszewicz1935}; see
\nbcite[Theorem~1.4.7]{schneider2014}}]\label{prop:straszewicz}
For every compact convex set \(K\subset\R^d\), the set of exposed points is
dense in the set of extreme points:
\(\ext K\subset\overline{\expo K}\).
\end{proposition}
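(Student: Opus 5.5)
The plan is to prove Straszewicz's theorem directly, by the classical farthest-point argument. I will first show that every farthest point of \(K\) from an external point is exposed. I will then approximate an arbitrary extreme point by such farthest points.

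\emph{Step 1: farthest points are exposed.} Fix \(z\in\R^d\) and let \(y\in K\) maximize \(\|x-z\|\) over \(K\). Set \(R=\|y-z\|>0\) (the case \(K=\{y\}\) is trivial). Then \(K\) lies in the closed ball \(B(z,R)\), which is strictly convex. Consider the functional \(\langle u,\cdot\rangle\) with \(u=(y-z)/R\). For \(x\in B(z,R)\) we have \(\langle u,x-z\rangle\leq R\), with equality only at \(x=y\). Hence \(y\) is the unique maximizer over \(K\), so \(y\in\expo K\).

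\emph{Step 2: farthest points approximate extreme points.} Let \(x_0\in\ext K\) and \(\eps>0\). The key claim is that there is a point \(z\) all of whose farthest points in \(K\) lie in \(B(x_0,\eps)\). By the standard lemma on extreme points, \(x_0\notin\overline{\mathrm{conv}}\bigl(K\setminus B^\circ(x_0,\eps)\bigr)\). This holds because otherwise, by compactness and Carathéodory, \(x_0\) would be a convex combination of points of \(K\) at distance \(\geq\eps\) from \(x_0\), contradicting extremality. Let \(L=\mathrm{conv}\bigl(K\setminus B^\circ(x_0,\eps)\bigr)\), which is compact. By separation, there is a unit vector \(w\) and \(a\) with \(\langle w,x_0\rangle>a\geq\max_L\langle w,\cdot\rangle\).

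Now take \(z=x_0-\rho w\) with \(\rho\) large. For \(x\in L\) we have
\(\|x-z\|^2=\|x-x_0\|^2+2\rho\langle w,x-x_0\rangle+\rho^2\leq D^2-2\rho(\langle w,x_0\rangle-a)+\rho^2\),
where \(D=\diam K\). On the other hand, \(\|x_0-z\|^2=\rho^2\). Choosing \(\rho>D^2/\bigl(2(\langle w,x_0\rangle-a)\bigr)\) therefore makes every point of \(K\setminus B^\circ(x_0,\eps)\subset L\) strictly closer to \(z\) than \(x_0\) is. Hence any farthest point of \(K\) from \(z\), which exists by compactness, lies in \(B^\circ(x_0,\eps)\). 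By Step 1 it is exposed. Since \(\eps\) is arbitrary, \(x_0\in\overline{\expo K}\).

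\emph{Main obstacle.} The main obstacle is the lemma that \(x_0\) stays outside the convex hull of \(K\) minus a neighbourhood of \(x_0\). I would first reduce it to the compactness of \(\mathrm{conv}\) of a compact set in \(\R^d\), which follows from Carathéodory. Then I would argue as follows. If \(x_0=\sum\lambda_i y_i\) with each \(y_i\in K\setminus B^\circ(x_0,\eps)\), pick an index with \(\lambda_i\in(0,1)\). Write \(x_0=\lambda_i y_i+(1-\lambda_i)y'\), where \(y'\in K\) is the normalized convex combination of the remaining points. Definition~\ref{def:extreme} then forces \(y_i=x_0\), which contradicts \(\|y_i-x_0\|\geq\eps\). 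The degenerate case where some \(\lambda_i=1\) is immediate, since it gives \(x_0=y_i\), again a contradiction.
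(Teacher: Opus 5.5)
Your proof is correct. The paper does not prove this proposition itself and only cites the classical result; your argument (a farthest point of \(K\) from a point \(z=x_0-\rho w\) placed far away behind a separating hyperplane is exposed and lies near \(x_0\)) is the standard farthest-point proof of Straszewicz's theorem. Your localization lemma \(x_0\notin\mathrm{conv}\bigl(K\setminus B^\circ(x_0,\eps)\bigr)\) takes the place of the Milman-type converse that a common textbook presentation uses to pass from \(K=\overline{\mathrm{conv}}\,\expo K\) to \(\ext K\subset\overline{\expo K}\). One case is left implicit: if \(K\setminus B^\circ(x_0,\eps)=\varnothing\), the separation step is vacuous, but then every farthest point from any \(z\) already lies in \(B^\circ(x_0,\eps)\).
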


\begin{proposition}[{Frostman's energy criterion; see
\nbcite[Theorem~4.13(a)]{falconer2014}}]\label{prop:frostman}
Let \(A\subset\R^d\), \(s>0\), and let \(\mu\) be a finite nonzero Borel
measure supported on \(A\), that is,
\[
    \mu(\R^d\setminus A)=0.
\]
If
\[
    I_s(\mu)
    \eq
    \iint_{\R^d\times\R^d}
    \|x-y\|^{-s}\,\mu(\dd x)\,\mu(\dd y)
    <\infty,
\]
then \(\mathcal H^s(A)>0\); in particular, \(\dimH A\geq s\).
\end{proposition}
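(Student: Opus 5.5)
The statement to prove is Frostman's energy criterion in the form of Proposition~\ref{prop:frostman}. The plan is to show directly that \(\mathcal H^s(A)>0\). The dimension bound then follows from the definition \(\dimH A=\inf\{s:\mathcal H^s(A)=0\}\) and monotonicity of \(\mathcal H^t\) in \(t\).

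\textbf{Step 1: a good subset.} Consider the \(s\)-potential
\[
    \phi(x)=\int\|x-y\|^{-s}\,\mu(\dd y).
\]
By Tonelli's theorem, \(\int\phi\,\dd\mu=I_s(\mu)<\infty\), so \(\phi<\infty\) \(\mu\)-almost everywhere. Choose \(M\) so large that the set \(A_1=\{x\in A:\phi(x)\leq M\}\) satisfies \(\mu(A_1)\geq\mu(\R^d)/2>0\). This uses \(\mu(\R^d\setminus A)=0\) and Borel measurability of \(\phi\), which follows from Tonelli. For \(x\in A_1\) and \(r>0\),
\[
    r^{-s}\mu(B(x,r))\leq\int_{B(x,r)}\|x-y\|^{-s}\,\mu(\dd y)\leq M,
\]
so \(\mu(B(x,r))\leq Mr^s\) uniformly on \(A_1\).

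\textbf{Step 2: mass distribution principle.} Let \(\{U_j\}\) be any countable cover of \(A_1\) by nonempty sets. Discard the sets that do not meet \(A_1\). For each remaining \(U_j\), pick \(x_j\in U_j\cap A_1\); then \(U_j\subset B(x_j,\diam U_j)\), with a slightly enlarged closed ball if needed. Hence
\[
    \mu(A_1)\leq\sum_j\mu\bigl(B(x_j,\diam U_j)\bigr)\leq M\sum_j(\diam U_j)^s .
\]
Taking the infimum over covers gives \(\mathcal H^s_\delta(A_1)\geq\mu(A_1)/M\) for every \(\delta\), hence \(\mathcal H^s(A)\geq\mathcal H^s(A_1)>0\).

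\textbf{Main obstacle.} Step 2 needs \(\mu(A_1)\) to be defined and bounded by the sum of ball masses, even though \(A\) itself may not be Borel. I would handle this by replacing \(A_1\) with the Borel set \(B_1=\{\phi\leq M\}\setminus N\). Here \(N\) is a \(\mu\)-null Borel set chosen so that \(B_1\) still has half the mass and, after intersecting with a Borel set of full measure that \(A\) contains up to a null set, we get \(B_1\subset A\). Alternatively, I would apply the ball bound to any point of \(\{\phi\leq M\}\) meeting \(U_j\), with the cover taken of \(A\), and use outer measure: \(\mu(\{\phi\leq M\})\leq\mu^*(A\cap\{\phi\leq M\})\) since \(\mu\) is concentrated on \(A\). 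The remaining steps are routine.
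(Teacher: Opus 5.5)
Your proof is correct, and it takes a different route from the one the paper relies on. The paper does not prove this statement; it cites Falconer's Theorem~4.13(a). Falconer sums the potential over disjoint annuli $B(x,r_i)\setminus B(x,q_i)$ to show that $\phi=\infty$ wherever $\limsup_{r\to0}\mu(B(x,r))/r^s>0$. Hence $\mu$-almost every point has zero upper $s$-density, and the density form of the mass distribution principle, applied with an arbitrarily small constant, gives the stronger conclusion $\mathcal H^s(A)=\infty$.

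You instead truncate the potential and use the one-line bound $\mu(B(x,r))\leq r^s\phi(x)$ to get a uniform Frostman condition on a set carrying half the mass. You then apply the global mass distribution principle. This avoids the annuli and the exhaustion over scales, and gives the explicit bound $\mathcal H^s(A)\geq\mu(\R^d)/(2M)$. It yields only positivity, but that is all Theorem~\ref{thm:lower} needs. If you also wanted $\mathcal H^s(A)=\infty$, dominated convergence would upgrade your Step~1 to zero upper density at every point where $\phi(x)<\infty$, since such points are not atoms.

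Your handling of measurability also fits the paper's use of the criterion better than the bare citation does. In Falconer's terminology, a mass distribution ``on $F$'' has its closed support inside $F$. The paper, however, applies the criterion to $A=X(G\cap U)$, which is neither closed nor Borel and on which $\mu$ is only concentrated. Your proposed fix removes a Borel null set $N\supset\R^d\setminus A$ from $\{\phi\leq M\}$. This proves exactly the formulation stated in the paper. It avoids the inner-regularity reduction (restricting $\mu$ to a compact subset of $A$) that the citation tacitly requires.
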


\begin{proposition}[density of a one-dimensional projection]
\label{prop:belt-density}
Let \(\sigma\) be the uniform probability measure on \(\Sph\), with
\(d\geq2\). Then~\eqref{eq:belt-general} holds for every \(u\in\Sph\) and
\(0<r<1\).
\end{proposition}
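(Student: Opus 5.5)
The plan is to compute \(\sigma\{x:|\langle u,x\rangle|\leq r\}\) explicitly by pushing the uniform measure forward under the coordinate map \(x\mapsto\langle u,x\rangle\), and then to bound the resulting one-dimensional density. By rotational invariance of \(\sigma\), it suffices to take \(u=e_d\), so we need the law of the last coordinate \(x_d\) of a uniform point on \(\Sph\).

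For \(d\geq3\), I will use the same spherical coordinates as in Appendix~\ref{app:psi}, namely \(x=(\cos\varphi\,\omega,\sin\varphi)\) with
\[
    \sigma(\dd x)=c_d(\cos\varphi)^{N-1}\dd\varphi\,\dd\omega .
\]
The substitution \(y=\sin\varphi\) then shows that \(x_d\) has density
\[
    f_d(y)=c_d'(1-y^2)^{(d-3)/2},\qquad y\in(-1,1).
\]
This density is bounded by \(c_d'\) as soon as \(d\geq3\). Therefore
\[
    \sigma\{|x_d|\leq r\}=\int_{-r}^{r}f_d(y)\dd y\leq 2c_d'r .
\]

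The case \(d=2\) is the only real obstacle, because there \(f_2(y)=\pi^{-1}(1-y^2)^{-1/2}\) is unbounded near \(\pm1\). I will handle it directly. With \(x=(\cos\phi,\sin\phi)\) and \(\phi\) uniform on \([0,2\pi)\), the set \(\{|\sin\phi|\leq r\}\) consists of two arcs of total length \(4\arcsin r\), so its measure is \((2/\pi)\arcsin r\). This is at most \(r\), since \(\arcsin r\leq\pi r/2\) on \([0,1]\) by concavity of \(\sin\) on \([0,\pi/2]\). Equivalently, integrating \(f_d\) only over \([-r,r]\) with \(r<1\) is harmless, and the bound \(\arcsin r\leq \pi r/2\) settles it.

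A cleaner uniform alternative that covers all \(d\) at once is to note that \(\{|x_d|\leq r\}\) is contained in the set where the latitude satisfies \(|\varphi|\leq\arcsin r\leq \pi r/2\). Since the \(\varphi\)-density \(c_d(\cos\varphi)^{N-1}\) is bounded for every \(N\geq1\), we get
\[
    \sigma\{|x_d|\leq r\}\leq c_d\,\sigma_{N-1}(S^{N-1})\cdot 2\cdot\frac{\pi r}{2}=Cr .
\]
This avoids the singular density entirely, and I would present the argument this way, treating \(d=2\) through the same formula with \(S^0\) carrying counting measure.
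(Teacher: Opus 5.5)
Your proof is correct and follows the paper's route. You reduce to \(u=e_d\) by rotational invariance, use the bounded density \(c_d(1-y^2)^{(d-3)/2}\) of the last coordinate for \(d\geq3\), and treat \(d=2\) via the two arcs of total length \(4\arcsin r\). Your preferred latitude version, with the bounded \(\varphi\)-density and \(\arcsin r\leq\pi r/2\), just merges the paper's two cases into one computation.
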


\begin{proof}
By invariance of \(\sigma\), it suffices to take \(u=e_d\). For \(d\geq3\),
the last coordinate of a uniform point on the sphere has density
\(c_d(1-y^2)^{(d-3)/2}\) on \([-1,1]\), which is bounded near zero; hence
\(\sigma\{|x_d|\leq r\}\leq C_d r\). For \(d=2\), the set
\(\{|x_2|\leq r\}\) consists of two arcs of length comparable to \(r\).
\end{proof}

\medskip

The next two statements, formulated in
Remarks~\ref{rem:exposed-extreme} and~\ref{rem:extreme-not-exposed}, are
standard facts of convex geometry rather than new results of this paper. They
are included for completeness.

\begin{proposition}[{exposed points are extreme; see
\nbcite[Proposition~2.6.2]{tropp2018}}]\label{prop:exposed-implies-extreme}
The assertion of Remark~\ref{rem:exposed-extreme} holds: every compact convex
set \(K\subset\R^d\) satisfies
\[
    \expo K\subset\ext K.
\]
\end{proposition}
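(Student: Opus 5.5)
The plan is to argue by contradiction directly from the definitions. Let \(x\in\expo K\), and choose \(u\in\Sph\) such that the functional \(y\mapsto\langle u,y\rangle\) attains its maximum \(h_K(u)\) over \(K\) only at \(x\); in the notation of Subsection~\ref{sec:support-faces}, this means \(K(u)=\{x\}\). Suppose that \(x=ty+(1-t)z\) with \(y,z\in K\) and \(t\in(0,1)\). We must show that \(y=z=x\), as required by Definition~\ref{def:extreme}.

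The key step is linearity of the functional. We have
\[
    h_K(u)=\langle u,x\rangle=t\langle u,y\rangle+(1-t)\langle u,z\rangle,
\]
while \(\langle u,y\rangle\leq h_K(u)\) and \(\langle u,z\rangle\leq h_K(u)\), because \(y,z\in K\). A convex combination with strictly positive weights \(t\) and \(1-t\) of two numbers not exceeding \(h_K(u)\) can equal \(h_K(u)\) only if both numbers equal \(h_K(u)\). If, say, \(\langle u,y\rangle<h_K(u)\), the right-hand side would be strictly smaller than \(h_K(u)\). Hence \(\langle u,y\rangle=\langle u,z\rangle=h_K(u)\), so that \(y,z\in K(u)\).

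Uniqueness of the maximizer then gives \(y=z=x\). Therefore \(x\) is extreme, which proves \(\expo K\subset\ext K\).

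There is no real obstacle. The only point requiring care is the use of strict positivity of \(t\) and \(1-t\) in the convex-combination step. Compactness of \(K\) is not actually needed, except to guarantee that \(h_K(u)\) is attained, and this is already built into the definition of an exposed point.
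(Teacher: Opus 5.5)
Your proof is correct and is the paper's argument: write \(\langle u,x\rangle\) as the convex combination \(t\langle u,y\rangle+(1-t)\langle u,z\rangle\), note that both terms are at most the maximum so strictly positive weights force equality, and conclude \(y=z=x\) by uniqueness of the maximizer. The opening phrase \enquote{by contradiction} is unnecessary, since the argument you actually give is direct.
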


\begin{proof}
Let \(x\in\expo K\), and suppose that the functional
\(z\mapsto\langle u,z\rangle\) attains its maximum over \(K\) only at \(x\).
If \(x=ty+(1-t)z\), where \(y,z\in K\) and \(t\in(0,1)\), then
\[
    \langle u,x\rangle
    =
    t\langle u,y\rangle+(1-t)\langle u,z\rangle.
\]
Both terms on the right are at most \(\langle u,x\rangle\), so equality is
possible only if
\(\langle u,y\rangle=\langle u,z\rangle=\langle u,x\rangle\). Uniqueness of
the maximizer gives \(y=z=x\), and thus \(x\in\ext K\).
\end{proof}

\begin{proposition}[{the reverse inclusion fails; see
\nbcite[Fig.~2.10]{tropp2018}}]\label{prop:extreme-not-exposed}
The assertion of Remark~\ref{rem:extreme-not-exposed} holds. For example, for
the compact convex set
\[
    K
    =
    \bigl([-1,1]\times[-2,0]\bigr)
    \cup
    \bigl\{(x,y)\in\R^2:x^2+y^2\leq1,\ y\geq0\bigr\},
\]
the points \(p_\pm=(\pm1,0)\) belong to
\(\ext K\setminus\expo K\).
\end{proposition}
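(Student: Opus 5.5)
Since \(K\) is compact and convex, I will first identify it as a union of a rectangle and a closed upper half-disc sharing the segment \([-1,1]\times\{0\}\). Equivalently, \(K=\{(x,y):|x|\leq1,\ -2\leq y\leq\sqrt{1-x^2}\}\). This exhibits \(K\) as the region between a convex function \(-2\) and a concave function \(\sqrt{1-x^2}\) over \([-1,1]\), so convexity is immediate, and compactness is clear. By the symmetry \((x,y)\mapsto(-x,y)\), it suffices to treat \(p_+=(1,0)\).

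To show \(p_+\in\ext K\), I will use the characterization of points of \(K\) with first coordinate \(1\): the only such points are \(\{1\}\times[-2,0]\). Suppose \(p_+=ty+(1-t)z\) with \(y,z\in K\) and \(t\in(0,1)\). Since every point of \(K\) has \(x\leq1\), the first coordinates of \(y\) and \(z\) must both equal \(1\). Hence \(y,z\in\{1\}\times[-2,0]\), so their second coordinates are \(\leq0\). Their convex combination has second coordinate \(0\), which forces both to equal \(0\). Therefore \(y=z=p_+\).

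To show \(p_+\notin\expo K\), I will take any \(u=(u_1,u_2)\in S^1\) and check that the maximum of \(\langle u,\cdot\rangle\) over \(K\) is not attained uniquely at \(p_+\). There are two cases.
\begin{itemize}
\item If \(u_2\neq0\) with \(u_2<0\), then \((1,-2)\in K\) gives \(\langle u,(1,-2)\rangle=u_1-2u_2>u_1=\langle u,p_+\rangle\), so \(p_+\) is not a maximizer.
\item If \(u_2>0\), the point \((\cos s,\sin s)\) on the arc, for small \(s>0\), gives \(u_1\cos s+u_2\sin s=u_1+u_2s+O(s^2)>u_1\), so again \(p_+\) is not a maximizer.
\end{itemize}
In the remaining case \(u_2=0\): if \(u_1=-1\), then \(p_+\) is not a maximizer, since \((-1,0)\) gives a larger value. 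If \(u=(1,0)\), the maximum equals \(1\) and is attained on the whole segment \(\{1\}\times[-2,0]\), so it is not unique.

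I expect no real obstacle; the only care needed is the tangency case \(u=(1,0)\), where the maximizing face is the nondegenerate vertical segment. This is precisely why the extreme point fails to be exposed.
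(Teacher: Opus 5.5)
Your proof is correct and follows essentially the same route as the paper. Extremality comes from the first-coordinate and then second-coordinate constraints, and non-exposedness holds because the only supporting direction at \(p_+\) is \(u=(1,0)\), whose maximizing face is the segment \(\{1\}\times[-2,0]\); your case analysis over \(u\in S^1\) (and the explicit convexity check) simply makes explicit the uniqueness of the supporting line that the paper asserts without proof.
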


\begin{proof}
The compact set \(K\) is obtained by attaching the upper half-disk to the top
side of the square (Fig.~\ref{fig:nonexposed}). If
\(p_\pm=ta+(1-t)b\), where \(a,b\in K\) and \(t\in(0,1)\), then the
first-coordinate condition followed by the second-coordinate condition forces
\(a=b=p_\pm\); hence
\(p_\pm\) are extreme. The unique supporting line at \(p_\pm\), namely the
common tangent \(x=\pm1\) to the arc and the side, intersects \(K\) along the
entire segment \(\{\pm1\}\times[-2,0]\). Thus \(p_\pm\) are not exposed.
\end{proof}

\begin{figure}[ht]
\centering
\begin{tikzpicture}[x=1.25cm,y=1.25cm]
\path[fill=blue!7]
    (-1,-2)--(1,-2)--(1,0)
    arc[start angle=0,end angle=180,radius=1]
    --(-1,-2)--cycle;
\draw[blue!65!black,thick]
    (-1,-2)--(1,-2)--(1,0)
    arc[start angle=0,end angle=180,radius=1]
    --cycle;
\draw[blue!65!black,densely dashed] (-1,0)--(1,0);
\draw[gray,dashed] (-1,-2.25)--(-1,1.25);
\draw[gray,dashed] (1,-2.25)--(1,1.25);
\fill[red!75!black] (-1,0) circle (2.2pt);
\fill[red!75!black] (1,0) circle (2.2pt);
\node[red!75!black,above left] at (-1,0) {\(p_-\)};
\node[red!75!black,above right] at (1,0) {\(p_+\)};
\node[font=\small] at (0,-1) {square};
\node[font=\small] at (0,0.55) {half-disk};
\end{tikzpicture}
\caption{Counterexample to the reverse inclusion: the red points \(p_-\) and
\(p_+\) are extreme, but the tangent lines \(x=\pm1\) support entire sides of
the square.}\label{fig:nonexposed}
\end{figure}

\begin{remark}[analytic sets and universal measurability]
\label{rem:analytic-universal}
We explain the step used in the proof of Theorem~\ref{thm:lower}. Here
\emph{analytic} has no connection with analytic functions: a subset \(A\) of
a Polish space \(E\) is analytic if \(A=f(P)\) for some Polish space \(P\) and
some continuous map \(f\colon P\to E\). Equivalently, \(A\) is the projection
of a Borel subset of a product of Polish spaces; in particular, the Borel
image of a Borel set is analytic
\nbcite[Definition~14.1 and Proposition~14.4]{kechris1995}.

A set \(A\subset E\) is \emph{universally measurable} if it belongs to the
completion of the Borel \(\sigma\)-algebra with respect to every finite Borel
measure \(\nu\) on \(E\). Equivalently, there exist Borel sets
\(B_\nu,C_\nu\subset E\) such that
\[
    B_\nu\subset A\subset C_\nu,
    \qquad
    \nu(C_\nu\setminus B_\nu)=0.
\]
Every analytic subset of a Polish space is universally measurable
\nbcite[Theorem~21.10]{kechris1995}; see also
\nbcite[Section~6.7]{bogachev2007}.

In Theorem~\ref{thm:lower}, the spaces \(\Sph\) and \(\R^d\) are Polish,
\(G\cap U\) is Borel, and \(X\) is a Borel map. Thus
\(A=X(G\cap U)\) is analytic and universally measurable. After completing
the measures \(\lambda\) and \(\mu=X_{\#}\lambda\), the identity defining the
pushforward extends to \(A\) and its complement, and hence
\[
    \mu(\R^d\setminus A)
    =
    \lambda\bigl\{u\in U:X(u)\notin A\bigr\}
    \leq
    \lambda(U\setminus G)
    =
    0.
\]
Consequently, although \(A\) need not be Borel, it is \(\mu\)-measurable and
\(\mu\) is supported on \(A\).
\end{remark}

\clearpage
\printbibliography[title={References}]

@article{gine1985,
  author       = {Gin{\'e}, Evarist and Hahn, Marjorie G.},
  title        = {Characterization and Domains of Attraction of {$p$}-Stable Random Compact Sets},
  journaltitle = {The Annals of Probability},
  volume       = {13},
  number       = {2},
  pages        = {447--468},
  year         = {1985}
}

@article{davydov2000,
  author       = {Davydov, Youri and Paulauskas, Vygantas and Ra{\v{c}}kauskas, Alfredas},
  title        = {More on {$p$}-Stable Convex Sets in Banach Spaces},
  journaltitle = {Journal of Theoretical Probability},
  volume       = {13},
  number       = {1},
  pages        = {39--64},
  year         = {2000}
}

@incollection{davydov2004,
  author    = {Davydov, Youri and Paulauskas, Vygantas},
  title     = {Recent Results on {$p$}-Stable Convex Compact Sets with Applications},
  booktitle = {Asymptotic Methods in Stochastics: Festschrift for Mikl{\'o}s Cs{\"o}rg{\H{o}}},
  editor    = {Horv{\'a}th, Lajos and Szyszkowicz, Barbara},
  series    = {Fields Institute Communications},
  volume    = {44},
  pages     = {127--139},
  publisher = {American Mathematical Society},
  location  = {Providence, RI},
  year      = {2004}
}

@article{davydov2008,
  author       = {Davydov, Youri and Molchanov, Ilya and Zuyev, Sergei},
  title        = {Strictly Stable Distributions on Convex Cones},
  journaltitle = {Electronic Journal of Probability},
  volume       = {13},
  number       = {11},
  pages        = {259--321},
  year         = {2008}
}

@article{davydov2019,
  author       = {Davydov, Youri and Paulauskas, Vygantas},
  title        = {Remarks on Random Countable Stable Zonotopes},
  journaltitle = {Statistics \& Probability Letters},
  volume       = {153},
  pages        = {187--191},
  year         = {2019}
}

@article{kukushkin2025,
  author       = {Kukushkin, Maksim A.},
  title        = {Geometrical Characteristics of Random Countable Zonotopes},
  journaltitle = {Zapiski Nauchnykh Seminarov POMI},
  volume       = {544},
  pages        = {185--210},
  year         = {2025},
  langid       = {russian}
}

@article{straszewicz1935,
  author       = {Straszewicz, Stefan},
  title        = {{\"U}ber exponierte Punkte abgeschlossener Punktmengen},
  journaltitle = {Fundamenta Mathematicae},
  volume       = {24},
  number       = {1},
  pages        = {139--143},
  year         = {1935},
  langid       = {german}
}

@article{taylor1966,
  author       = {Taylor, S. James and Wendel, James G.},
  title        = {The Exact {H}ausdorff Measure of the Zero Set of a Stable Process},
  journaltitle = {Zeitschrift f{\"u}r Wahrscheinlichkeitstheorie und Verwandte Gebiete},
  volume       = {6},
  number       = {2},
  pages        = {170--180},
  year         = {1966}
}

@book{molchanov2005,
  author    = {Molchanov, Ilya},
  title     = {Theory of Random Sets},
  series    = {Probability and Its Applications},
  publisher = {Springer},
  location  = {London},
  year      = {2005}
}

@book{schneider2014,
  author    = {Schneider, Rolf},
  title     = {Convex Bodies: The {B}runn--{M}inkowski Theory},
  edition   = {2},
  publisher = {Cambridge University Press},
  location  = {Cambridge},
  year      = {2014}
}

@report{tropp2018,
  author      = {Tropp, Joel A.},
  title       = {{ACM} 204: Lectures on Convex Geometry},
  type        = {Caltech CMS Lecture Notes},
  number      = {2019-02},
  institution = {California Institute of Technology},
  location    = {Pasadena, CA},
  year        = {2018}
}

@book{falconer2014,
  author    = {Falconer, Kenneth},
  title     = {Fractal Geometry: Mathematical Foundations and Applications},
  edition   = {3},
  publisher = {Wiley},
  location  = {Chichester},
  year      = {2014}
}

@book{mattila1995,
  author    = {Mattila, Pertti},
  title     = {Geometry of Sets and Measures in Euclidean Spaces},
  publisher = {Cambridge University Press},
  location  = {Cambridge},
  year      = {1995}
}

@book{samorodnitsky1994,
  author    = {Samorodnitsky, Gennady and Taqqu, Murad S.},
  title     = {Stable Non-Gaussian Random Processes: Stochastic Models with Infinite Variance},
  publisher = {Chapman \& Hall},
  location  = {New York},
  year      = {1994}
}

@book{bertoin1996,
  author    = {Bertoin, Jean},
  title     = {L{\'e}vy Processes},
  publisher = {Cambridge University Press},
  location  = {Cambridge},
  year      = {1996}
}

@book{kechris1995,
  author    = {Kechris, Alexander S.},
  title     = {Classical Descriptive Set Theory},
  series    = {Graduate Texts in Mathematics},
  volume    = {156},
  publisher = {Springer},
  location  = {New York},
  year      = {1995}
}

@book{bogachev2007,
  author    = {Bogachev, Vladimir I.},
  title     = {Measure Theory},
  volume    = {2},
  publisher = {Springer},
  location  = {Berlin},
  year      = {2007}
}

\end{document}